\def\({\left(}
\def\){\right)}
\newtheorem{lema}{Lemma}[section]
\newtheorem*{teorema*}{Theorem}
\newtheorem{remark}[lema]{Remark}
\newtheorem{lemma}{Lemma}[section]
\newtheorem{theorem}[lema]{Theorem}
\newtheorem{proposition}[lema]{Proposition}
\newtheorem{definition}[lema]{Definition}
\hfill \fbox{}}
\hfill \fbox{}}
\def\beq{\begin{equation}}
\def\eeq{\end{equation}}
\def\epsilon{\varepsilon}
\begin{document}

\title{Some new results in extended cone $b-$metric space}

\author{Iqbal Kour}
\address{Department of Mathematics, University of Jammu } 
\email{iqbalkour208@gmail.com}

\author{Pooja Saproo}
\address{Department of Mathematics, University of Jammu} 
\email{poojasaproo1994@gmail.com}

\author{Shallu Sharma*}
\address{Department of Mathematics, University of Jammu } 
\email{shallujamwal09@gmail.com}

\keywords{Cone metric space, extended cone b-metric space, fixed point, completeness }

\date{}

\begin{abstract}
The main aspect of this paper is to investigate some topological properties and Kannan-type contractions in extended cone $b-$metric spaces. Additionally we have imposed some extra conditions such that a sequence in an extended cone $b-$metric space becomes a Cauchy sequence. Furthermore, in order to achieve new results the concept of asymptotic regularity has also been utilized.
\end{abstract}
\subjclass{47H10, 49J45, 54H25, 54E35,54A20}

\maketitle{ }
\section{Introduction}
The contraction mapping principle in many forms of generalised metric spaces serves as the foundation for advanced metric fixed point theory. b-metric space is one of such generalised metric spaces, which was first introduced by Bakhtin \cite{B} and Czerwik \cite{C}. The renowned Banach  contraction principle in b-metric space was generalised by Bakhtin \cite{B}.  Extended b-metric was introduced by Kamran et al. \cite{KSU} as a development of the b-metric. Aydi et al. \cite{AFKKU} introduced the notion of a new extended $b-$metric space. Guang et al. \cite{HZ} created cone metric space in 2004, replacing the set of real numbers with an ordered Banach space. Kannan \cite{K} presented one of the most significant generalisations of the Banach contraction principle. Kannan's work \cite{K} enhanced the Banach contraction mapping notion by presenting a new contraction, currently known as the Kannan contraction. Kannan fixed point results have been extended and generalised in the establishment of $b$-metric spaces \cite{C} and generalised metric spaces \cite{AA}.
Hussain et al.\cite{HS} established the concept of cone $b$-metric space by combining the notions of $b-$metric and cone metric. Das and Beg \cite{DB} further introduced the notion of extended cone $b$-metric space. In this paper we investigate Kannan type contractions within the framework of new extended b-metric.    
\section{Preliminaries}
\begin{definition}\cite{C}
Let $\mathsf X\neq \phi$ be any set and  $t\in [1,\infty).$ A $b-$metric is a function $\mathfrak B:\mathsf X\times \mathsf X\to [0,\infty)$ such that for every $x,y,z\in \mathsf X$ the following hold:
\begin{itemize}
\item [(i)] $\mathfrak B(x,y)=0 ~\iff ~x=y,$
\item [(ii)] $\mathfrak B(x,y)=\mathfrak B(y,x),$
\item [(iii)] $\mathfrak B(x,z)\leq t[\mathfrak B(x,y)+\mathfrak B(y,z)].$
\end{itemize} 
Then $(\mathsf X,\mathfrak B)$ is said to be a $b-$metric space.
\end{definition}

\begin{definition}\cite{AFKKU}
Let $\mathsf X\neq \phi$ be any set and $\Theta:\mathsf X \times \mathsf X \times \mathsf X \to [1,\infty)$ be a function. A map $d:\mathsf X\times \mathsf X\to [0,\infty)$ is said to be an extended $b-$metric on $\mathsf X$ if for every $x,y,z\in \mathsf X$ the following hold:
\begin{itemize}
\item [(i)] $d(x,y)=0 ~\iff ~x=y,$
\item [(ii)] $d(x,y)=d(y,x),$
\item [(iii)] $d(x,z)\leq \Theta(x,y,z)(d(x,y)+d(y,z)).$
\end{itemize} 
Then $(\mathsf X,\mathsf d)$ is said an extended $b-$metric space. 
\end{definition}

\begin{definition}\cite{HZ}
Let $\mathsf E\subset \mathbb R$ be a Banach space. A set $\mathsf P$ contained in $\mathsf E$  is said to be a cone if it satisfies the following:
\begin{itemize}
\item [(i)] $\mathsf P$ is non-empty, closed and $\mathsf P\neq \{0\}.$
\item [(ii)]$sx+ty\in \mathsf P$ whenever $x,y\in \mathsf P$ and $s,t\in \mathbb{R}_{\geq 0}.$
\item [(iii)] $x\in \mathsf P$ and $-x\mathsf \in P$ implies that $x=0.$
\end{itemize}
\end{definition}
Hereafter we assume that $\mathsf P$ is a cone contained in a real Banach space $\mathsf E.$
\begin{definition}\cite{HZ}
A partial ordering $\leq$ with respect to $\mathsf P$ is defined as:
\begin{itemize}
\item [(i)] $x\leq y \iff y-x\in \mathsf P$ and $x<y$ denotes that $x\leq y,x\neq y.$
\item [(ii)] $x\ll y$ indicates that $y-x$ is an element of int$(\mathsf P)$(interior $\mathsf P$). 
\end{itemize}
\end{definition}	

\begin{definition}\cite{HZ}
$\mathsf P$ is said to be normal if there is a positive number $\mathsf N$ such that $x\leq y$ implies that $\|x\|\leq \mathsf N \|y\|,\forall x,y\in \mathsf E.$ The smallest $\mathsf N$ satisfying the above condition of normality is said to be the normal constant of $\mathsf P.$ 
\end{definition}

Hereafter we assume that $\mathsf P$ has non-empty interior and $\leq$ denotes the partial ordering with respect to $\mathsf P.$

\begin{definition}\cite{HZ}
Let $\mathsf X\neq \phi$ be any set. A cone metric on $\mathsf X$ is a map $d:\mathsf X\times \mathsf X \to \mathsf E$ such that for each $x,y,z\in \mathsf X$ the following conditions are satisfied:
\begin{itemize}
\item [(i)] $d(x,y)> 0,d(x,y)=0 ~\iff ~x=y,$
\item [(ii)] $d(x,y)=d(y,x),$
\item [(iii)] $d(x,z)\leq d(x,y)+d(y,z).$
\end{itemize} 
Then $(\mathsf X,\mathsf d)$ is a cone metric space.
\end{definition}

\begin{definition}\cite{HZ}
Let $\{x_{n}\}$ be a sequence in a cone metric space $(\mathsf X,d)$ and $\mathsf P$ be a normal cone with normal constant $N.$ Then 
\begin{itemize}
\item [(i)] $\{x_{n}\}$ is convergent to $x$ if for each $u\in \mathsf E,0\ll u,$ there is a natural number $\mathsf M$ so that for all $m\geq \mathsf M,$ we have $d(x_{n},x)\ll u.$ 
\item [(ii)]  $\{x_{n}\}$ is a Cauchy sequence if for each $u\in \mathsf E,0\ll u,$ there is a natural number $\mathsf M$ so that for all $m,n\geq \mathsf M,$ we have $d(x_{n},x_{m})\ll u.$  
\item [(iii)] $(\mathsf X,d)$ is complete if every Cauchy sequence converges in $\mathsf X.$
\end{itemize}
\end{definition}

\begin{lemma}\cite{RDH}\label{lem:RDH}
Let $(\mathsf X,d)$ be a cone metric space.
\begin{itemize}
\item [(i)] For every $u_{1}\gg 0$ and $u_{2}\in \mathsf P$ there exists $u_{3}\gg 0$ such that $u\gg u_{1}$ and $u\gg u_{2}.$
\item [(ii)] For every $u_{1}\gg 0,u_{2}\gg 0$ there exists $u\gg 0$ such that $u\ll u_{1}$ and $u\ll u_{2}.$
\end{itemize}
\end{lemma}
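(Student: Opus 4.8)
\medskip
\noindent\textbf{Proof strategy.}
The plan is to give fully explicit choices of $u_{3}$ in (i) and of $u$ in (ii); nothing is needed beyond two elementary stability properties of the interior of the cone, namely that $\mathrm{int}(\mathsf P)+\mathsf P\subseteq\mathrm{int}(\mathsf P)$ and that $t\,\mathrm{int}(\mathsf P)\subseteq\mathrm{int}(\mathsf P)$ for every scalar $t>0$. Both are immediate: if $v\in\mathrm{int}(\mathsf P)$ there is an open set $U$ with $v\in U\subseteq\mathsf P$, and then for $w\in\mathsf P$ and $t>0$ the sets $U+w$ and $tU$ are open subsets of $\mathsf P$ containing $v+w$ and $tv$ respectively. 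I will also use that $v\gg 0$ forces $v\neq 0$, so $\|v\|>0$ (as $0$ cannot be an interior point of a proper cone).

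\smallskip
\noindent\emph{Part (i).} Reading the conclusion as ``there exists $u_{3}\gg 0$ with $u_{3}\gg u_{1}$ and $u_{3}\gg u_{2}$'', I would take $u_{3}:=2u_{1}+u_{2}$. Then $u_{3}-u_{1}=u_{1}+u_{2}\in\mathrm{int}(\mathsf P)$ since $u_{1}\in\mathrm{int}(\mathsf P)$ and $u_{2}\in\mathsf P$, so $u_{3}\gg u_{1}$; next $u_{3}-u_{2}=2u_{1}\in\mathrm{int}(\mathsf P)$, so $u_{3}\gg u_{2}$; finally $u_{3}=u_{1}+(u_{1}+u_{2})\in\mathrm{int}(\mathsf P)$, so $u_{3}\gg 0$. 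No metric input is used here.

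\smallskip
\noindent\emph{Part (ii).} I would look for $u$ of the form $u:=\epsilon(u_{1}+u_{2})$ with a small scalar $\epsilon>0$. Since $u_{1},u_{2}\in\mathrm{int}(\mathsf P)$, choose $r_{1},r_{2}>0$ with $B(u_{1},r_{1})\subseteq\mathsf P$ and $B(u_{2},r_{2})\subseteq\mathsf P$; being open, these balls lie in $\mathrm{int}(\mathsf P)$. Take any $\epsilon$ with $0<\epsilon<\min\{r_{1},r_{2}\}/\|u_{1}+u_{2}\|$. Then $u_{1}-u=u_{1}-\epsilon(u_{1}+u_{2})$ differs from $u_{1}$ by a vector of norm $<r_{1}$, hence lies in $B(u_{1},r_{1})\subseteq\mathrm{int}(\mathsf P)$, i.e.\ $u\ll u_{1}$; symmetrically $u\ll u_{2}$; and $u$ is a positive scalar multiple of the interior point $u_{1}+u_{2}$, so $u\gg 0$.

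\smallskip
The only step that needs genuine care --- and hence what I expect to be the main obstacle --- is the topological bookkeeping about $\mathrm{int}(\mathsf P)$ collected in the first paragraph: that an open ball about an interior point of $\mathsf P$ stays inside $\mathsf P$, together with the two stability properties stated there. Once those are in hand, each part reduces to the one-line verification above. An equivalent route for (ii) is to invoke the standard fact that for each $c\gg 0$ there is $\delta>0$ with $\|b\|<\delta\Rightarrow c-b\gg 0$, applied with $c=u_{1}$ and $c=u_{2}$ and $b=u$; this is the same computation in a different guise.
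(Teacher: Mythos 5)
Your proof is correct; note, however, that the paper itself offers no proof of this lemma --- it is quoted verbatim from the cited reference [RDH] (Rezapour et al.), so there is nothing in the source to compare your argument against. Your explicit choices $u_{3}=2u_{1}+u_{2}$ for (i) and $u=\epsilon(u_{1}+u_{2})$ with $0<\epsilon<\min\{r_{1},r_{2}\}/\|u_{1}+u_{2}\|$ for (ii) are fully justified by the two stability properties of $\mathrm{int}(\mathsf P)$ you isolate (including the needed observation that $0\notin\mathrm{int}(\mathsf P)$, so the division by $\|u_{1}+u_{2}\|$ is legitimate), and they coincide in spirit with the standard argument in the literature that you mention at the end.
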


\begin{definition}\cite{DB}
Let $\mathsf X\neq \phi$ be any set and $\Theta:\mathsf X \times \mathsf X \times \mathsf X \to [1,\infty)$ be a function. A map $d_{\Theta}:\mathsf X\times \mathsf X\to \mathsf E$ is said to be an extended cone $b-$metric on $\mathsf X$ if for every $x,y,z\in \mathsf X$ the following hold:
\begin{itemize}
\item [(i)] $d_{\Theta}(x,y)>0,d_{\Theta}(x,y)=0 ~\iff ~x=y,$
\item [(ii)] $d_{\Theta}(x,y)=d_{\Theta}(y,x),$
\item [(iii)] $d_{\Theta}(x,z)\leq \Theta(x,y,z)(d_{\Theta}(x,y)+d_{\Theta}(y,z)).$
\end{itemize} 
Then $(\mathsf X,\mathsf d_{\Theta})$ is said an extended cone $b-$metric space. 
\end{definition}

\begin{definition}\cite{DB}
Let $\{x_{n}\}$ be a sequence in an extended cone $b-$metric space $(\mathsf X,d_{\Theta})$ and $\mathsf P$ be a normal cone with normal constant $N.$ Then 
\begin{itemize}
\item [(i)] $\{x_{n}\}$ is convergent to $x$ if for each $u\in \mathsf E,0\ll u,$ there is a natural number $\mathsf M$ so that for all $m\geq \mathsf M,$ we have $d_{\Theta}(x_{n},x)\ll u.$ 
\item [(ii)] $\{x_{n}\}$ is a Cauchy sequence if for each $u\in \mathsf E,0\ll u,$ there is a natural number $\mathsf M$ so that for all $m,n\geq \mathsf M,$ we have $d_{\Theta}(x_{n},x_{m})\ll u.$  
\item [(iii)] $(\mathsf X,d_{\Theta})$ is complete if every Cauchy sequence in $\mathsf X$ converges in $\mathsf X.$
\end{itemize}
\end{definition}

\begin{definition}\cite{DB}
Let $(\mathsf{X},d_{\Theta})$ be an extended cone $b-$metric space and $x\in \mathsf X,0\ll u.$ The open and closed balls in $\mathsf X$ are defined as $\mathsf B(a,u)=\{y\in \mathsf X:d_{\Theta}(a,y)\ll u\}$ and $\mathsf B[a,y]=\{y\in \mathsf X:d_{\Theta}(a,y)\leq u\},$ respectively.
\end{definition}

\begin{definition}\cite{DB}
Let $(\mathsf X,d_{\Theta})$ be an extended cone $b-$metric and $\{(x_{n},y_{n})\}$ be a sequence in $\mathsf X \times \mathsf X$. Then $d_{\Theta}$ is continuous if $x_{n}$ is convergent to $x$ and $y_{n}$ is convergent to $y$ implies that  $d_{\Theta}(x_{n},y_{n})$ is convergent to $d_{\Theta}(x,y)$ in $\mathsf E.$ 
\end{definition}

\section{Topological properties of extended cone $b-$metric space}
\begin{proposition}
The family $\mathcal B=\{\mathsf B(x,u):u\gg 0\}$ is a basis for the topology $\sigma_{d_{\Theta}}$ on $\mathsf X.$ 
\end{proposition}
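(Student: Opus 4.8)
The plan is to check the two axioms that characterise a basis for a topology: that the members of $\mathcal B$ cover $\mathsf X$, and that for any two members $\mathsf B(a_1,u_1),\mathsf B(a_2,u_2)$ and any point $x\in \mathsf B(a_1,u_1)\cap \mathsf B(a_2,u_2)$ there is a member $\mathsf B(x,u_3)$ with $x\in \mathsf B(x,u_3)\subseteq \mathsf B(a_1,u_1)\cap \mathsf B(a_2,u_2)$; the topology $\sigma_{d_{\Theta}}$ is then precisely the one generated by $\mathcal B$.

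The covering property is immediate: for every $x\in\mathsf X$ and every $u\gg 0$ we have $d_{\Theta}(x,x)=0\ll u$, so $x\in\mathsf B(x,u)$, whence $\bigcup\mathcal B=\mathsf X$. For the intersection property, fix $x\in \mathsf B(a_1,u_1)\cap \mathsf B(a_2,u_2)$ and set $\delta_i:=u_i-d_{\Theta}(a_i,x)\gg 0$ for $i=1,2$. For any $y$ the extended triangle inequality gives
\[
d_{\Theta}(a_i,y)\le \Theta(a_i,x,y)\bigl(d_{\Theta}(a_i,x)+d_{\Theta}(x,y)\bigr),
\]
so to force $y\in \mathsf B(a_i,u_i)$ it suffices to control $d_{\Theta}(x,y)$ so that the right-hand side stays $\ll u_i$. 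I would first choose interior points $v_i\gg 0$ small enough that the contribution of $d_{\Theta}(x,y)$, after multiplication by the $\Theta$-factor, is absorbed into $\delta_i$, and then apply Lemma \ref{lem:RDH}(ii) to produce a single $u_3\gg 0$ with $u_3\ll v_1$ and $u_3\ll v_2$ simultaneously; one then verifies directly that $\mathsf B(x,u_3)\subseteq \mathsf B(a_1,u_1)\cap \mathsf B(a_2,u_2)$, which yields the claim.

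The delicate point, and the step I expect to be the real obstacle, is the presence of the control function $\Theta$. In contrast with ordinary cone metric spaces, $\Theta(a_i,x,y)$ depends on $y$, which ranges over the small ball $\mathsf B(x,u_3)$, so it is not automatically bounded; and since $\Theta\ge 1$, the quantity $\Theta(a_i,x,y)\,d_{\Theta}(a_i,x)$ cannot be made small merely by shrinking $u_3$. Carrying the argument through therefore requires a local boundedness property of $\Theta$ near $x$ together with sufficient slack in the radii $u_i$ relative to that bound (or, alternatively, a choice of $u_3$ tailored to $\Theta$ rather than the naive one); this is where the otherwise routine estimate has to be handled with care.
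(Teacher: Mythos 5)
Your covering step is fine, but the intersection step is where your proposal stalls, and by your own admission it stalls at a genuine obstruction rather than a removable technicality: for balls $\mathsf B(a_1,u_1)$, $\mathsf B(a_2,u_2)$ with \emph{distinct} centers and a point $x$ in their intersection, the estimate $d_{\Theta}(a_i,y)\leq \Theta(a_i,x,y)(d_{\Theta}(a_i,x)+d_{\Theta}(x,y))$ cannot be closed, because $\Theta(a_i,x,y)\geq 1$ multiplies the fixed quantity $d_{\Theta}(a_i,x)$, which may already be close to $u_i$, and $\Theta$ is not assumed bounded, continuous, or otherwise controlled near $x$. No choice of $u_3$ repairs this: in extended cone $b$-metric spaces open balls need not be open sets, and the full two-center basis axiom you set out to verify is simply not available under the stated hypotheses. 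So as written your argument has a gap that cannot be filled without adding assumptions.

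The paper avoids the issue entirely by proving only the concentric case: given $x\in\mathsf B(x,u_1)\cap\mathsf B(x,u_2)$, Lemma \ref{lem:RDH}(ii) supplies $u\gg 0$ with $u\ll u_1$ and $u\ll u_2$, and then $d_{\Theta}(x,y)\ll u$ forces $d_{\Theta}(x,y)\ll u_i$ because $u_i-d_{\Theta}(x,y)=(u_i-u)+(u-d_{\Theta}(x,y))$ lies in the interior of $\mathsf P$; no triangle inequality and no $\Theta$ enter at all. In other words, $\sigma_{d_{\Theta}}$ is being treated as the topology for which the balls centered at $x$ form a local base at $x$ (consistent with the sequential-openness characterization proved next), not the topology for which every ball is itself open. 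If you restate the second axiom in that weaker, center-preserving form, your argument collapses to the paper's and goes through; if you insist on the textbook version with arbitrary centers, you would need extra hypotheses on $\Theta$ that the proposition does not supply.
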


\begin{proof}
\begin{itemize}
\item [(i)] Let $x\in \mathsf X.$ Then there exists $u\gg 0$ such that $x\in \mathsf B(x,u).$ Hence $x\in \mathsf B(x,u)\subseteq \displaystyle \bigcup_{x\in \mathsf X,u\gg 0} \mathsf B(x,u),$
\item [(ii)] Let $x\in \mathsf X,u_{1}\gg 0,u_{2}\gg 0$ such that $x\in \mathsf B(x,u_{1})\cap \mathsf B(x,u_{2}).$ Then by using Lemma \ref{lem:RDH}  there exists $u\gg 0$ such that $u\ll u_{1}$ and $u\ll u_{2}.$ Clearly, $x\in \mathsf B(x,u)\subseteq \mathsf B(x,u_{1})\cap \mathsf B(x,u_{2}).$   
\end{itemize}
\end{proof}

\begin{definition}
Let $(\mathsf{X},d_{\Theta})$ be an extended cone $b-$metric space. A set $\mathfrak U\subset (\mathsf X,d_{\Theta})$ is said to be sequentially open if for $x\in \mathfrak U$ such that $x_{n}\to x$ then there exists a natural number $\mathsf N$ such that $x_{n}\in \mathfrak U$ for all $n> \mathsf N.$ 
\end{definition}

\begin{proposition}
Let $(\mathsf X,d_{\Theta})$ be an extended cone $b-$metric space. Then the sequential topology $\sigma$ and the topology $\sigma_{d_{\Theta}}$ induced by $d_{\Theta}$ coincide.  
\end {proposition}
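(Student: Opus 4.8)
The plan is to prove the two inclusions $\sigma \subseteq \sigma_{d_{\Theta}}$ and $\sigma_{d_{\Theta}} \subseteq \sigma$ separately, where $\sigma$ denotes the family of sequentially open sets and $\sigma_{d_{\Theta}}$ the topology generated by the basis $\mathcal B$ from the previous proposition. Both directions reduce to the interplay between the order-convergence $d_{\Theta}(x_n,x) \ll u$ and membership in basic open balls $\mathsf B(x,u)$.

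First I would show $\sigma_{d_{\Theta}} \subseteq \sigma$, i.e.\ every $d_{\Theta}$-open set is sequentially open. Since sequential openness is clearly preserved under arbitrary unions, it suffices to check that each basic ball $\mathsf B(a,u)$ is sequentially open. So let $x \in \mathsf B(a,u)$, meaning $d_{\Theta}(a,x) \ll u$, and suppose $x_n \to x$. The idea is to set $u_1 := u - d_{\Theta}(a,x) \gg 0$, so that it would suffice to guarantee $d_{\Theta}(a,x_n) \leq d_{\Theta}(a,x) + (\text{something} \ll u_1)$ eventually; this is where one invokes continuity of $d_{\Theta}$ together with the fact (Definition of continuity) that $d_{\Theta}(a,x_n) \to d_{\Theta}(a,x)$, so for $n$ large $d_{\Theta}(a,x_n) \ll d_{\Theta}(a,x) + u_1 = u$, placing $x_n$ in $\mathsf B(a,u)$. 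One must be a little careful passing from norm-convergence $\|d_{\Theta}(a,x_n) - d_{\Theta}(a,x)\| \to 0$ to the strict order relation, but this is exactly the standard cone-metric argument using that $\operatorname{int}(\mathsf P) \neq \emptyset$ and normality of $\mathsf P$.

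Next I would prove the reverse inclusion $\sigma \subseteq \sigma_{d_{\Theta}}$, which I expect to be the subtler direction. Let $\mathfrak U$ be sequentially open; I want to show $\mathfrak U$ is $d_{\Theta}$-open, i.e.\ that for every $x \in \mathfrak U$ there is $u \gg 0$ with $\mathsf B(x,u) \subseteq \mathfrak U$. The natural approach is contrapositive: if no such ball exists, then for every $u \gg 0$ the ball $\mathsf B(x,u)$ meets $\mathsf X \setminus \mathfrak U$. Choosing a fixed $u_0 \gg 0$ and scaling, the balls $\mathsf B(x, \tfrac{1}{n} u_0)$ each contain a point $x_n \notin \mathfrak U$; one then checks $x_n \to x$ (since $d_{\Theta}(x_n, x) \ll \tfrac1n u_0$ and, for any prescribed $v \gg 0$, eventually $\tfrac1n u_0 \ll v$ because $\|\tfrac1n u_0\| \to 0$ and $v \in \operatorname{int}\mathsf P$), contradicting sequential openness of $\mathfrak U$ at $x$.

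**Main obstacle.** The delicate point throughout is the translation between the order structure ($\ll$) and the norm topology on $\mathsf E$: specifically, justifying that $\|v_n\| \to 0$ implies $w - v_n \in \operatorname{int}(\mathsf P)$ eventually for any fixed $w \gg 0$, and conversely. This requires normality of the cone $\mathsf P$ (assumed, via the normal constant $N$, in the convergence definitions) and the openness of $\operatorname{int}(\mathsf P)$; once this lemma is in hand, both inclusions are routine. A secondary subtlety is that the definition of convergence here is stated with $d_{\Theta}(x_n,x) \ll u$ "for all $m \geq \mathsf M$" (an evident typo for $n$), so I would tacitly read it as the standard notion. I would also lean on the continuity hypothesis for $d_{\Theta}$ in the first inclusion, since without it one cannot directly control $d_{\Theta}(a,x_n)$ from $d_{\Theta}(x_n,x) \to 0$ through the extended triangle inequality (the factor $\Theta(a,x,x_n)$ need not be bounded).
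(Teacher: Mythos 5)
Your overall structure (proving the two inclusions separately) matches the paper, but both halves are carried out differently, and the comparison is instructive. For $\sigma\subseteq\sigma_{d_{\Theta}}$ your use of the shrinking balls $\mathsf B\bigl(x,\tfrac1n u_{0}\bigr)$ is actually a repair of the paper's argument: the paper negates openness by producing a \emph{single} $u_{1}\gg 0$ with $\mathsf B(x,u_{1})\not\subseteq\mathfrak U$, picks $x_{n}\in\mathsf B(x,u_{1})\setminus\mathfrak U$, and then asserts from $d_{\Theta}(x_{n},x)\ll u_{1}$ alone that $x_{n}\to x$ --- which does not follow, since convergence requires $d_{\Theta}(x_{n},x)\ll v$ eventually for \emph{every} $v\gg 0$. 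Your version, which exploits that non-openness gives a bad point in every ball $\mathsf B\bigl(x,\tfrac1n u_{0}\bigr)$ and then verifies convergence via $\bigl\|\tfrac1n u_{0}\bigr\|\to 0$ and the openness of $\operatorname{int}(\mathsf P)$, is the correct form of this contradiction argument. For the reverse inclusion $\sigma_{d_{\Theta}}\subseteq\sigma$ you take a stronger and more expensive route: you prove that every ball $\mathsf B(a,u)$ with \emph{arbitrary} center is sequentially open, which forces you to invoke continuity of $d_{\Theta}$ (to get $d_{\Theta}(a,x_{n})\to d_{\Theta}(a,x)$), a hypothesis the proposition does not assume. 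The paper instead uses that a $d_{\Theta}$-open set contains a ball $\mathsf B(x,u_{1})$ centered at the point $x$ itself, so the definition of $x_{n}\to x$ immediately places the tail of the sequence in that ball --- no continuity and no control of $d_{\Theta}(a,x_{n})$ is needed. You should adopt that shortcut to avoid the extra hypothesis; note, though, that it silently relies on the earlier claim that the balls form a basis in the strong sense that every open set contains a ball centered at each of its points, a point the paper itself verifies only for balls sharing a common center.
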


\begin{proof}
Suppose $\mathfrak U\in \sigma.$ Suppose $\mathfrak U\notin \sigma_{d_{\Theta}}.$ Then there is some $x\in \mathfrak U$ and $u_{1}\gg 0$ such that $\mathsf B (x,u_{1})$ is not contained in $\mathfrak U.$ Let $x_{n}\in \mathsf B (x,u_{1})$ such that $x_{n}\notin \mathfrak U$ for every natural number $n.$ Then $d_{\Theta}(x_{n},x)\ll u_{1}.$ This implies that $x_{n}\to x$ in $(\mathfrak X,d_{\Theta}).$ Since $\mathfrak U\in \sigma.$ Then there exists a natural number $\mathfrak N$ such that $x_{n}\in \mathfrak U$ for all $n> \mathfrak N,$ a contradiction.
Next suppose that $\mathfrak U\in \sigma_{d_{\Theta}}.$ For all $x\in \mathfrak U$ such that $x_{n}\to x$ in $(\mathfrak X,\sigma_{\Theta}),$ we have $\mathsf B (x,u_{1})\subset \mathfrak U$ for some $u_{1}\gg 0.$ This implies there exists a natural number $\mathfrak N_{0}$ such that $d_{\theta}(x_{n},x)\ll u_{1}$ for all $n\geq \mathfrak N_{0}.$ Hence $x_{n}\in \mathfrak U$ for all $n\geq \mathfrak N_{0}.$ Thus, $\mathfrak U\in \sigma.$
\end{proof}

\section{Kannan-type contractions in extended cone $b-$metric space}
\begin{proposition}
Let $(\mathsf{X},d_{\Theta})$ be an extended cone $b-$metric space and $\Theta:\mathsf X \times \mathsf X \times \mathsf X \to [1,\infty)$ be a map. If there exists $s\in [0,1)$ such that the sequence $\{x_{n_{1}}\}$ satisfies $\displaystyle\lim_{n_{1},n_{2}\to\infty} \Theta(x_{n_{1}},x_{n_{2}},x_{n_{1}+1})<1/s$ and 
\begin{equation}
0<d_{\Theta}(x_{n_{1}},x_{n_{1}+1})\leq sd_{\Theta}(x_{n_{1}-1},x_{n_{1}})
\end{equation}
for $n_{1}\in \mathbb{N},$ then the sequence $\{x_{n_{1}}\}$ is a Cauchy sequence.  
\end{proposition}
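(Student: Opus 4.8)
\medskip
\noindent\emph{Proof plan.}\quad The plan is the classical telescoping estimate, carried out in the order of the cone and then converted to norms via normality. First I would abbreviate $d_{k}:=d_{\Theta}(x_{k},x_{k+1})\ (\ge 0)$, so that $d_{0}=d_{\Theta}(x_{0},x_{1})$; the standing recursion $0<d_{n_{1}}\le s\,d_{n_{1}-1}$ then gives, by an obvious induction on $n_{1}$,
\[
d_{n_{1}}\ \le\ s^{\,n_{1}}\,d_{0}\qquad(n_{1}\in\mathbb N),
\]
using only that multiplication by the scalar $s\ge 0$ and transitivity of $\le$ are compatible with the cone order.

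Next I would fix $m>n$ and iterate the extended triangle inequality (condition (iii)), peeling off the points $x_{n+1},\dots,x_{m-1}$ one at a time. After the usual bookkeeping this produces an estimate of the form
\[
d_{\Theta}(x_{n},x_{m})\ \le\ \sum_{p=n}^{m-1}\Bigl(\prod_{i=n}^{p}\Theta(x_{i},x_{i+1},x_{m})\Bigr)\,d_{p}\ \le\ \Bigl(\sum_{p=n}^{m-1}\Bigl(\prod_{i=n}^{p}\Theta(x_{i},x_{i+1},x_{m})\Bigr)s^{\,p}\Bigr)d_{0},
\]
in which the coefficient of $d_{0}$ is an ordinary nonnegative real number. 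Invoking the hypothesis, I would choose $\mu$ with $\lim\Theta(\cdot)<\mu<1/s$ and then $N_{0}\in\mathbb N$ so large that every $\Theta$-value occurring above all of whose indices are $\ge N_{0}$ is $<\mu$. For $n\ge N_{0}$ the $p$-th summand is then $\le\mu^{\,p-n+1}s^{\,p}d_{0}=s^{\,n}\mu\,(\mu s)^{\,p-n}d_{0}$, and since $\mu s<1$ the resulting geometric series converges, giving
\[
d_{\Theta}(x_{n},x_{m})\ \le\ \frac{\mu\,s^{\,n}}{1-\mu s}\,d_{0}\qquad\text{for all }m>n\ge N_{0}.
\]

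Finally I would pass to norms: by normality of $\mathsf P$ with normal constant $\mathsf N$, the last display yields $\|d_{\Theta}(x_{n},x_{m})\|\le \mathsf N\,\mu\,s^{\,n}\|d_{0}\|/(1-\mu s)$, which tends to $0$ as $n\to\infty$, uniformly in $m>n$. Given $u$ with $0\ll u$, I would fix $\delta>0$ so small that the open ball $B(u,\delta)$ lies in $\mathrm{int}\,\mathsf P$ (possible since $u\in\mathrm{int}\,\mathsf P$); then $\|a\|<\delta$ forces $u-a\in\mathrm{int}\,\mathsf P$, i.e.\ $a\ll u$. Picking $\mathsf M\ge N_{0}$ with $\mathsf N\,\mu\,s^{\,\mathsf M}\|d_{0}\|/(1-\mu s)<\delta$ then gives $d_{\Theta}(x_{n},x_{m})\ll u$ whenever $m>n\ge\mathsf M$; the case $n>m$ follows from the symmetry in (ii). Hence $\{x_{n_{1}}\}$ is a Cauchy sequence.

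The hard part will be the middle step: although each factor $\Theta(\cdot)\ge 1$ and the $p$-th term carries about $p-n$ of them, this product must not swamp the geometric decay $d_{p}\le s^{\,p}d_{0}$ coming from the first step. This is exactly what the assumption $\lim\Theta<1/s$ provides --- it forces the relevant factors to be eventually $<\mu<1/s$, so that $\mu s<1$ and the telescoped sum is dominated by a convergent geometric series whose tail is uniform in $m$. A little care is also needed to check that the $\Theta$-values produced by iterating (iii) are genuinely of the type controlled by the stated limit (the consecutive indices $x_{i},x_{i+1}$ along the orbit together with the far endpoint $x_{m}$). By comparison the first and last steps are routine, the upgrade from $\|\cdot\|\to 0$ to $\ll u$ being the familiar consequence of $\mathsf P$ having nonempty interior.
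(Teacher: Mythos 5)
Your proposal is correct and follows essentially the same route as the paper: iterate the contraction to get $d_{\Theta}(x_{n},x_{n+1})\le s^{n}d_{\Theta}(x_{0},x_{1})$, telescope the extended triangle inequality along $x_{n+1},\dots,x_{m-1}$, and use the hypothesis $\lim\Theta<1/s$ to dominate the resulting sum by a convergent series. Your explicit geometric majorant with $\mu s<1$ and the closing normality argument (norm decay $\Rightarrow$ $d_{\Theta}(x_{n},x_{m})\ll u$) simply make rigorous what the paper compresses into a ratio-test appeal and the phrase ``letting $n_{1}\to\infty$.''
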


\begin{proof}
Let $\{x_{n_{1}}\}$ be a sequence in $\mathsf X.$ Now,
\begin{eqnarray}
d_{\Theta}(x_{n_{1}},x_{n_{1}+1})&\leq&sd_{\Theta}(x_{n_{1}-1},x_{n_{1}})\nonumber\\
                                 &\leq&s^{2}d_{\Theta}(x_{n_{1}-2},x_{n_{1}-1})\nonumber\\
																 &\vdots&\nonumber\\
																 &\leq&s^{n_{1}}d_{\Theta}(x_{0},x_{1}).
\end{eqnarray}
Since $s\in [0,1),$ we see that
\begin{equation}
\displaystyle\lim_{n_{1}\to\infty} d_{\Theta}(x_{n_{1}},x_{n_{1}+1})=0. 
\end{equation}
Using inequality  , for $n_{2}\geq n_{1},$ we get
\begin{eqnarray*}
d_{\Theta}(x_{n_{1}},x_{n_{2}})&\leq&\Theta(x_{n_1},x_{n_2},x_{n_{1}+1})(d_{\Theta}(x_{n_{1}},x_{n_{1}+1})+d_{\Theta}(x_{n_{1}+1},x_{n_{2}}))\\
                               &=&\Theta(x_{n_1},x_{n_2},x_{n_{1}+1}) d_{\Theta}(x_{n_{1}},x_{n_{1}+1})+\Theta(x_{n_1},x_{n_2},x_{n_{1}+1})d_{\Theta}(x_{n_{1}+1},x_{n_{2}})\\
															&\leq&\Theta(x_{n_1},x_{n_2},x_{n_{1}+1}) s^{n_{1}} d_{\Theta}(x_{0},x_{1})\\
															&+&\Theta(x_{n_{1}},x_{n_{2}},x_{n_{1}+1})\Theta(x_{n_{1}+1},x_{n_{2}},x_{n_{1}+2})(d_{\Theta}(x_{n_{1}+1},x_{n_{1}+2}),d_{\Theta}(x_                                 {n_{1}+2},x_{n_{2}}))\\
															&\leq&\Theta(x_{n_1},x_{n_2},x_{n_{1}+1}) s^{n_{1}} d_{\Theta}(x_{0},x_{1})\\
															&+&\Theta(x_{n_{1}},x_{n_{2}},x_{n_{1}+1})\Theta(x_{n_{1}+1},x_{n_{2}},x_{n_{1}+2}) s^{n_{1}+1} d_{\Theta}(x_{0},x_{1})\\
															&+&\ldots+\Theta(x_{n_{1}},x_{n_{2}},x_{n_{1}+1})\Theta(x_{n_{1}+1},x_{n_{2}},x_{n_{1}+2})\ldots\Theta(x_{n_{2}-2},x_{n_{2}},x_{n_{2}                              -1})s^{n_{2}-1}d_{\Theta}(x_{0},x_{1})\\
															&\leq&[\Theta(x_{n_1},x_{n_2},x_{n_{1}+1}) s^{n_{1}}\\
															&+&\Theta(x_{n_{1}},x_{n_{2}},x_{n_{1}+1})\Theta(x_{n_{1}+1},x_{n_{2}},x_{n_{1}+2})s^{n_{1}+1}\\
															&+&\ldots+\Theta(x_{n_{1}},x_{n_{2}},x_{n_{1}+1})\Theta(x_{n_{1}+1},x_{n_{2}},x_{n_{1}+2})\ldots\Theta(x_{n_{2}-2},x_{n_{2}},x_{n_{2}                                 -1})s^{n_{2}-1}]d_{\Theta}(x_{0},x_{1}).
\end{eqnarray*}
Since $\displaystyle\lim_{n_{1},n_{2}\to\infty} \Theta(x_{n_{1}},x_{n_{2}},x_{n_{1}+1})s<1.$ We see that by ratio test the series $\displaystyle\sum_{n_{1}=1}^{\infty}s^{n_{1}}\displaystyle\prod_{j=1}^{n_{1}}\Theta(x_{j},x_{n_{2}},x_{j+1})$ converges. Let $\mathcal S=\displaystyle\sum_{n_{1}=1}^{\infty}s^{n_{1}}\displaystyle\prod_{j=1}^{n_{1}}\Theta(x_{j},x_{n_{2}},x_{j+1})$ and $\mathcal S_{n_{1}}=\displaystyle\sum_{i=1}^{n_{1}}s^{i}\displaystyle\prod_{j=1}^{i}\Theta(x_{j},x_{n_{2}},x_{j+1}).$ Therefore,
\begin{equation}
d_{\Theta}(x_{n_{1}},x_{n_{2}})\leq d_{\Theta}(x_{1},x_{0})[\mathcal S_{n_{2}-1}-\mathcal S_{n_{1}-1}]
\end{equation}
Letting $n_{1}\to \infty,$ we obtain the desired result.
\end{proof}

\begin{theorem}\label{thm:9}
Let $(\mathsf X,d_{\Theta})$ be a complete extended cone $b-$metric space and $P$ be a cone in $E.$ Let $f:\mathsf X\to \mathsf X$ be a mapping that satisfies:
\begin{equation}
d_{\Theta}(f(x),f(y))\leq s[d_{\Theta}(x,f(x))+d_{\Theta}(y,f(y))] + td_{\Theta}(y,f(x)),\forall x,y\in \mathsf X,
\end{equation}
where $s\in (0,\frac{1}{2})$ and $t\in [0,1).$
Suppose that 
\begin{equation}
\displaystyle\sup_{n_{2}\geq 1}\lim_{n_{1}\to \infty}\Theta(x_{n_{1}},x_{n_{2}},x_{n_{1}+1})<\frac{1-s}{s},\forall x_{0}\in \mathsf X,
\end{equation}
such that $x_{n_{1}}=f^{n_{1}}(x_{0}),n_{1}\in \mathbb{N}.$ Then $f$ has a unique fixed point $u$ in $X.$ Moreover, for each $x\in \mathsf X,$ the sequence $\{f^{n_{1}}(x)\}$ is convergent to $u,$ and 
\begin{equation}
d_{\Theta}(f(x_{n_{1}}),u)\leq \frac{s}{1-t}\left(\frac{s}{1-s}\right)^{n_{1}}d_{\Theta}(f(x_{0}),x_{0}),~n_{1}=0,1,2,\ldots
\end{equation}
\end{theorem}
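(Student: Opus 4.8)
The plan is to run the Picard iteration, extract a geometric decay of the successive step lengths, quote the preceding Proposition to obtain a Cauchy sequence, and then identify its limit with the fixed point; uniqueness and the rate estimate come out at the end by elementary manipulations of the contractive inequality.

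First I would fix $x_0\in\mathsf X$ and put $x_{n_1}=f^{n_1}(x_0)$. Taking $x=x_{n_1-1}$, $y=x_{n_1}$ in the contractive inequality and using $d_{\Theta}(x_{n_1},x_{n_1})=0$ gives $d_{\Theta}(x_{n_1},x_{n_1+1})\leq s\,d_{\Theta}(x_{n_1-1},x_{n_1})+s\,d_{\Theta}(x_{n_1},x_{n_1+1})$, hence $d_{\Theta}(x_{n_1},x_{n_1+1})\leq\lambda\,d_{\Theta}(x_{n_1-1},x_{n_1})$ with $\lambda:=s/(1-s)\in(0,1)$ (this is exactly where $s<1/2$ is used). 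If $x_{n_1}=x_{n_1+1}$ for some $n_1$ then that point is already fixed; otherwise $d_{\Theta}(x_{n_1},x_{n_1+1})>0$ for all $n_1$ and iterating yields $d_{\Theta}(x_{n_1},x_{n_1+1})\leq\lambda^{n_1}d_{\Theta}(x_0,x_1)$. The hypothesis on $\Theta$ says $\lim_{n_1\to\infty}\Theta(x_{n_1},x_{n_2},x_{n_1+1})<1/\lambda$ uniformly in $n_2$, which is precisely the $\Theta$-condition of the preceding Proposition with contraction ratio $\lambda$ in place of $s$; so that Proposition applies and $\{x_{n_1}\}$ is Cauchy, hence converges to some $u\in\mathsf X$ by completeness.

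Next I would show $f(u)=u$. Taking $x=x_{n_1}$, $y=u$ gives $d_{\Theta}(x_{n_1+1},f(u))\leq s\,d_{\Theta}(x_{n_1},x_{n_1+1})+s\,d_{\Theta}(u,f(u))+t\,d_{\Theta}(u,x_{n_1+1})$; feeding this into the triangle inequality $d_{\Theta}(u,f(u))\leq\Theta(u,x_{n_1+1},f(u))\bigl(d_{\Theta}(u,x_{n_1+1})+d_{\Theta}(x_{n_1+1},f(u))\bigr)$, passing to norms via normality of $\mathsf P$, and using $\|d_{\Theta}(x_{n_1},x_{n_1+1})\|\to0$ and $\|d_{\Theta}(u,x_{n_1+1})\|\to0$, one is left with $\|d_{\Theta}(u,f(u))\|$ bounded by a quantity tending to $0$, so $d_{\Theta}(u,f(u))=0$. (Using continuity of $d_{\Theta}$ this is cleaner: passing to the limit in the displayed inequality gives $d_{\Theta}(u,f(u))\leq s\,d_{\Theta}(u,f(u))$, and $1-s>0$ forces $d_{\Theta}(u,f(u))=0$.) I expect this identification step to be the main obstacle: it is the only place where finiteness of $\sup_{n_2}\lim_{n_1}\Theta$ — and, implicitly, continuity of $d_{\Theta}$ or some control of $\Theta$ near $u$ together with the size of the normal constant — really has to be invoked, the remaining estimates being mere bookkeeping.

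Finally, once $f(u)=u$ is in hand, applying the contractive inequality with $x=x_{n_1}$, $y=u$ once more gives $d_{\Theta}(f(x_{n_1}),u)=d_{\Theta}(x_{n_1+1},u)\leq s\,d_{\Theta}(x_{n_1},x_{n_1+1})+t\,d_{\Theta}(u,x_{n_1+1})$, so $(1-t)\,d_{\Theta}(x_{n_1+1},u)\leq s\,d_{\Theta}(x_{n_1},x_{n_1+1})\leq s\,\lambda^{n_1}d_{\Theta}(x_1,x_0)$, which rearranges to the asserted bound $d_{\Theta}(f(x_{n_1}),u)\leq\frac{s}{1-t}\bigl(\frac{s}{1-s}\bigr)^{n_1}d_{\Theta}(f(x_0),x_0)$ and in particular shows $f^{n_1}(x_0)\to u$; running the whole argument from an arbitrary $x\in\mathsf X$ produces a fixed point which by uniqueness must equal $u$. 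For uniqueness, if $f(v)=v$ as well then $d_{\Theta}(u,v)=d_{\Theta}(f(u),f(v))\leq s[d_{\Theta}(u,f(u))+d_{\Theta}(v,f(v))]+t\,d_{\Theta}(v,f(u))=t\,d_{\Theta}(u,v)$, so $(1-t)\,d_{\Theta}(u,v)\leq0$ and therefore $u=v$.
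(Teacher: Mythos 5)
Your proposal is correct and follows essentially the same route as the paper: the same Picard iteration, the same rearrangement giving the ratio $\lambda=s/(1-s)$ (using $s<1/2$), the same limit passage to identify the fixed point, and the same rearrangements for uniqueness and the rate estimate. The only difference is cosmetic --- you invoke the preceding Proposition (with $\lambda$ in place of $s$) to conclude the sequence is Cauchy, whereas the paper re-derives that telescoping/ratio-test argument inline; you also rightly flag that the identification step silently uses continuity of $d_{\Theta}$ (or normality of $\mathsf P$), an assumption the paper likewise invokes without listing it in the theorem's hypotheses.
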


\begin{proof}
Let $x_{0}\in \mathsf X$ be arbitrary. Define
\begin{equation}
x_{n_{1}+1}=f(x_{n_{1}})=f^{n_{1}+1}(x_{0}).
\end{equation}
Clearly, $x_{n_{1}}$ is a fixed point of $f$ if $x_{n_{1}}=x_{n_{1}+1},$ for some $n_{1}\in \mathbb{N}.$ If not, suppose that $x_{n_{1}}$ and $x_{n_{1}+1}$ are distinct points in $\mathsf X$ for each $n_{1}\geq 0.$ Since
\begin{equation}
d_{\Theta}(x_{n_{1}},x_{n_{1}+1})=d_{\Theta}(f(x_{n_{1}-1}),f(x_{n_{1}}))
\end{equation}
we have
\begin{equation}
d_{\Theta}(f(x_{n_{1}-1}),f(x_{n_{1}}))\leq s[d_{\Theta}(x_{n_{1}-1},f(x_{n_{1}-1}))+d_{\Theta}((x_{n_{1}}),f(x_{n_{1}}))]+td_{\Theta}(x_{n_{1}},f(x_{n_{1}-1})).
\end{equation}
Then
\begin{equation}
d_{\Theta}(x_{n_{1}},x_{n_{1}+1})\leq s[d_{\Theta}(x_{n_{1}-1},x_{n_{1}})+d_{\Theta}(x_{n_{1}},x_{n_{1}+1})]+td_{\Theta}(x_{n_{1}},x_{n_{1}}).
\end{equation}
\end{proof}
Therefore,
\begin{equation}
d_{\Theta}(x_{n_{1}},x_{n_{1}+1})\leq \left( \frac{s}{1-s}\right) d_{\Theta}(x_{n_{1}-1},x_{n_{1}}).
\end{equation}
Proceeding in the similar manner, we see that 
\begin{equation}\label{eq:15}
d_{\Theta}(x_{n_{1}},x_{n_{1}+1})\leq {\left( \frac{s}{1-s}\right)}^{n}d_{\Theta}(x_{0},x_{1}),
\end{equation}
and
\begin{equation}
d_{\Theta}(f(x_{n_{1}-1}),f(x_{n_{1}})\leq {\left( \frac{s}{1-s}\right)}^{n}d_{\Theta}(x_{0},f(x_{0})).
\end{equation}
Suppose $n_{1},n_{2}$ are natural numbers such that $n_{2}>n_{1}.$ By applying triangular inequality, we have
\begin{eqnarray}
d_{\Theta}(x_{n_{1}},x_{n_{2}})&\leq&\Theta(x_{n_{1}},x_{n_{2}},x_{n_{1}+1})[d_{\Theta}(x_{n_{1}},x_{n_{1}+1})+d_{\Theta}(x_{n_{1}+1},x_{n_{2}})]\nonumber\\
                              &\leq&\Theta(x_{n_{1}},x_{n_{2}},x_{n_{1}+1})d_{\Theta}(x_{n_{1}},x_{n_{1}+1})\nonumber\\
															&+&\Theta(x_{n_{1}},x_{n_{2}},x_{n_{1}+1})\Theta(x_{n_{1}+1},x_{n_{2}},x_{n_{1}+2})[d_{\Theta}(x_{n_{1}+1},x_{n_{1}+2})+d_{\Theta}(x_                              {n_{1}+2},x_{n_{2}})]\nonumber\\
															&\leq&\Theta(x_{n_{1}},x_{n_{2}},x_{n_{1}+1})d_{\Theta}(x_{n_{1}},x_{n_{1}+1})\nonumber\\
															&+&\Theta(x_{n_{1}},x_{n_{2}},x_{n_{1}+1})\Theta(x_{n_{1}+1},x_{n_{2}},x_{n_{1}+2})d_{\Theta}(x_{n_{1}+1},x_{n_{1}+2})\nonumber\\
															&+&\ldots+\Theta(x_{n_{1}},x_{n_{2}},x_{n_{1}+1})\Theta(x_{n_{1}+1},x_{n_{2}},x_{n_{1}+2})\nonumber\\
															& &\Theta(x_{n_{1}+2},x_{n_{2}},x_{n_{1}+3})\ldots \Theta(x_{n_{2}-2},x_{n_{2}},x_{n_{2}-1})d_{\Theta}(x_{n_{2}-1},x_{n_{2}})
\end{eqnarray}
Since
\begin{eqnarray}
d_{\Theta}(x_{n_{1}},x_{n_{1}+1})&\leq&\left(\frac{s}{1-s}\right)^{n_{1}}d_{\Theta}(x_{0},x_{1}),n\geq 0,
\end{eqnarray}
we have
\begin{eqnarray}
d_{\Theta}(x_{n_{1}},x_{n_{2}})&\leq&\Theta(x_{n_{1}},x_{n_{2}},x_{n_{1}+1})\left(\frac{s}{1-s}\right)^{n_{1}}d_{\Theta}(x_{0},x_{1})\nonumber\\
                               &+&\Theta(x_{n_{1}},x_{n_{2}},x_{n_{1}+1})\Theta(x_{n_{1}+1},x_{n_{2}},x_{n_{1}+2})\left(\frac{s}{1-s}\right)^{n_{1}+1}d_{\Theta}(x_                               {0},x_{1})\nonumber\\
															 &+&\ldots+\Theta(x_{n_{1}},x_{n_{2}},x_{n_{1}+1})\Theta(x_{n_{1}+1},x_{n_{2}},x_{n_{1}+2})\Theta(x_{n_{1}+2},x_{n_{2}},x_{n_{1}+3})                               \nonumber\\
															& &\ldots \Theta(x_{n_{2}-2},x_{n_{2}},x_{n_{2}-1}) \left(\frac{s}{1-s}\right)^{n_{2}-1}d_{\Theta}(x_{0},x_{1})\nonumber\\
															&\leq&[\Theta(x_{n_{1}},x_{n_{2}},x_{n_{1}+1})\left(\frac{s}{1-s}\right)^{n_{1}}\nonumber\\
															&+&\Theta(x_{n_{1}},x_{n_{2}},x_{n_{1}+1})\Theta(x_{n_{1}+1},x_{n_{2}},x_{n_{1}+2})\left(\frac{s}{1-s}\right)^{n_{1}+1}\nonumber\\
															&+&\ldots+\Theta(x_{n_{1}},x_{n_{2}},x_{n_{1}+1})\Theta(x_{n_{1}+1},x_{n_{2}},x_{n_{1}+2})\Theta(x_{n_{1}+2},x_{n_{2}},x_{n_{1}+3})                               \nonumber\\
															& &\ldots \Theta(x_{n_{2}-2},x_{n_{2}},x_{n_{2}-1}) \left(\frac{s}{1-s}\right)^{n_{2}-1}]d_{\Theta}(x_{0},x_{1}).
\end{eqnarray}
Moreover, 
\begin{equation}
\displaystyle\sup_{n_{2}\geq 1}\lim_{n_{1},n_{2}\to \infty} \Theta(x_{n_{1}},x_{n_{2}},x_{n_{1}+1})\frac{s}{1-s}<1,
\end{equation}
We see that the series $\displaystyle\sum_{n_{1}=1}^{\infty}\left(\frac{s}{1-s}\right)^{n_{1}}\prod_{j=1}^{n_{1}}\Theta(x_{j},x_{n_{2}},x_{j+1})$ is convergent for every natural number $n_{2}$ by ratio test.\\
 Next suppose $\mathcal{S}=\displaystyle\sum_{n_{1}=1}^{\infty}\left(\frac{s}{1-s}\right)^{n_{1}}\prod_{j=1}^{n_{1}}\Theta(x_{j},x_{n_{2}},x_{j+1})$ and
\begin{equation}
 \mathcal{S}_{n_{1}}=\displaystyle\sum_{i=1}^{n_{1}}\left(\frac{s}{1-s}\right)^{i}\prod_{j=1}^{i}\Theta(x_{j},x_{n_{2}},x_{j+1}).
\end{equation} 
Hence for $n_{2}>n_{1},$ using above inequality we have
\begin{equation}
d_{\Theta}(x_{n_{1}},x_{n_{2}})\leq d_{\Theta}(x_{0},x_{1})(\mathcal{S}_{n_{2}-1}-\mathcal S_{n_{1}-1})
\end{equation}
Then
\begin{equation}
\displaystyle\lim_{n\to \infty} d_{\Theta}(x_{n_{1}},x_{n_{2}})=0.
\end{equation}
Therefore, $\{x_{n}\}$ is a Cauchy sequence. Since $\mathsf{X}$ is complete, there exists $u\in \mathsf X$ such that $x_{n_{1}}\to u$ as $n_{1}\to \infty.$\\
Claim: $u$ is a fixed point of $f.$\\
Since $d_{\Theta}(f(x_{n_{1}}),f(u))\leq s[d_{\Theta}(x_{n_{1}},f(x_{n_{1}}))+d_{\Theta}(u,f(u))]+td_{\Theta}(u,f(x_{n_{1}})).$ In context of the previous supposition that $d_{\Theta}$ is continuous, taking limit $n_{1}\to \infty,$ we have
\begin{equation}
d_{\Theta}(u,f(u))\leq s d_{\Theta}(u,f(u)).
\end{equation}
This is only possible when $d_{\Theta}(u,f(u))=0.$ Hence $f(u)=u.$\\
Next we show that fixed point of $f$ is unique. For this, let $v$ be a fixed point of $f$ distinct from $u.$ Then
\begin{equation}
d_{\Theta}(u,v)=d_{\Theta}(f(u),f(v))\leq s[d_{\Theta}(u,f(u))+d_{\Theta}(v,f(v))]+td_{\Theta}(u,f(v)).
\end{equation}
We have
\begin{equation}
d_{\Theta}(u,v)\leq t d_{\Theta}(u,v).
\end{equation}
 This is only possible when $d_{\Theta}(u,v)=0.$ Hence $u$ is the unique fixed point of $f$ in $\mathsf X.$  
Also, we have
\begin{equation}
d_{\Theta}(f(x_{n_{1}-1}),f(x_{n_{1}}))\leq s[d_{\Theta}(f(x_{n_{1}-2}),f(x_{n_{1}-1}))+d_{\Theta}(f(x_{n_{1}-1}),f(x_{n_{1}}))]+td_{\Theta}(x_{n_{1}},x_{n_{1}}).
\end{equation}
This implies that
\begin{equation}
d_{\Theta}(f(x_{n_{1}-1}),f(x_{n_{1}}))\leq\left(\frac{s}{1-s}\right)d_{\Theta}(f(x_{n_{1}-2}),f(x_{n_{1}-1}))
\end{equation}
Further,
\begin{eqnarray}
d_{\Theta}(f(x_{n_{1}}),u)&\leq& s[d_{\Theta}(f(x_{n_{1}-1}),f(x_{n_{1}}))+d_{\Theta}(u,f(u))]+td_{\Theta}(u,f(x_{n_{1}}))\nonumber\\
                          &\leq& s d_{\Theta}(f(x_{n_{1}-1}),f(x_{n_{1}}))+td_{\Theta}(u,f(x_{n_{1}}))
\end{eqnarray}
 From (\ref{eq:15}) we have
\begin{equation}
d_{\Theta}(f(x_{n_{1}}),u) \leq \frac{s}{1-t}\left(\frac{s}{1-s}\right)^{n_{1}}d_{\Theta}(f(x_{0}),x_{0}),n\geq 0.
\end{equation}
Hence the proof.
\begin{theorem}\label{thm:14}
Let $(\mathsf{X},d_{\Theta})$ be a complete extended cone $b-$metric space, $d_{\Theta}$ be a continuous functional and $\mathcal{N}\neq \phi$ be a closed set contained in $\mathsf{X}.$ Suppose $f:\mathcal N\to \mathcal N$ be a mapping that satisfies 
\begin{equation}\label{eq:58}
d_{\Theta}(f(x),f(y))\leq s[d_{\Theta}(x,f(x))+d_{\Theta}(y,f(y))]+td_{\Theta}(y,f(x)),\forall x,y\in \mathcal{N},0\leq s,t 1
\end{equation}
and there exist real numbers $\gamma,\delta$ where $\gamma\in (0,1)$ and $\delta >0$ such that for arbitrary $x\in \mathcal N,$ there exists $x^{*}$ in $\mathcal N$ satisfying
\begin{eqnarray}
d_{\Theta}(x^{*},f(x^{*}))&\leq&\gamma d_{\Theta}(x,f(x)),\nonumber\\
d_{\Theta}(x^{*},x)&\leq&\delta d_{\Theta}(x,f(x)).
\end{eqnarray}
Moreover, for an arbitrary $x_{0}\in \mathcal N,$ suppose that $\{x_{n_{1}}=f^{n_{1}}(x_{0})\}$ satisfies 
\begin{equation}
\displaystyle\sup_{n_{2}\geq 1}\lim_{n_{1}\to \infty}\Theta(x_{n_{1}},x_{n_{1}+1},x_{n_{2}})<\frac{1}{\gamma}.
\end{equation}
Then $f$ has a unique fixed point.
\end{theorem}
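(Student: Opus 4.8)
The overall plan is to follow the scheme of Theorem~\ref{thm:9} and of the Proposition preceding it, but to run the argument on the sequence generated by the contractive selection $x\mapsto x^{*}$ rather than on the Picard iterates, since it is that sequence whose consecutive displacements decay geometrically with ratio $\gamma$. Concretely, I would fix $x_{0}\in\mathcal N$; if $x_{0}=f(x_{0})$ there is nothing to prove, so I may assume $c:=d_{\Theta}(x_{0},f(x_{0}))>0$. Applying the selection hypothesis recursively, I would set $z_{0}=x_{0}$, $z_{n+1}=(z_{n})^{*}\in\mathcal N$, and check by induction that
\[
d_{\Theta}(z_{n},f(z_{n}))\le\gamma\,d_{\Theta}(z_{n-1},f(z_{n-1}))\le\cdots\le\gamma^{n}c,\qquad
d_{\Theta}(z_{n},z_{n+1})\le\delta\,d_{\Theta}(z_{n},f(z_{n}))\le\delta\gamma^{n}c .
\]

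Next I would show $\{z_{n}\}$ is Cauchy. For $m>n$, iterating the extended-triangle inequality for $d_{\Theta}$ exactly as in the proof of Theorem~\ref{thm:9} and inserting the bound $d_{\Theta}(z_{i},z_{i+1})\le\delta\gamma^{i}c$ term by term should give
\[
d_{\Theta}(z_{n},z_{m})\le\delta c\sum_{i=n}^{m-1}\gamma^{i}\prod_{j=n}^{i}\Theta(z_{j},z_{m},z_{j+1})\le\delta c\,\big(\mathcal S_{m-1}-\mathcal S_{n-1}\big),
\]
where $\mathcal S_{k}=\sum_{i=1}^{k}\gamma^{i}\prod_{j=1}^{i}\Theta(z_{j},z_{m},z_{j+1})$. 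The hypothesis $\sup_{m\ge1}\lim_{n\to\infty}\Theta(z_{n},z_{n+1},z_{m})<1/\gamma$ makes the ratio of consecutive terms eventually smaller than $1$, uniformly in $m$, so the ratio test yields convergence of $\mathcal S_{k}$ and hence $\mathcal S_{m-1}-\mathcal S_{n-1}\to0$ as $n,m\to\infty$. Normality of $\mathsf P$ then turns this into $d_{\Theta}(z_{n},z_{m})\ll u$ eventually, for every $u\gg0$, so $\{z_{n}\}$ is Cauchy; since $\mathcal N$ is closed in the complete space $\mathsf X$ it is itself complete, and I obtain $z_{n}\to u$ for some $u\in\mathcal N$.

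Then I would identify $u$ as the fixed point. From $d_{\Theta}(z_{n},f(z_{n}))\le\gamma^{n}c\to0$ together with $z_{n}\to u$, one more use of the extended-triangle inequality (keeping the coefficient $\Theta(f(z_{n}),z_{n},u)$ bounded, which the growth hypothesis on $\Theta$ permits) gives $f(z_{n})\to u$. Putting $x=z_{n}$, $y=u$ in~\eqref{eq:58},
\[
d_{\Theta}(f(z_{n}),f(u))\le s\big[d_{\Theta}(z_{n},f(z_{n}))+d_{\Theta}(u,f(u))\big]+t\,d_{\Theta}(u,f(z_{n})),
\]
and letting $n\to\infty$ while using continuity of $d_{\Theta}$ (so that $d_{\Theta}(f(z_{n}),f(u))\to d_{\Theta}(u,f(u))$ and the two outer terms on the right vanish), I get $d_{\Theta}(u,f(u))\le s\,d_{\Theta}(u,f(u))$ with $s<1$, which forces $f(u)=u$. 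For uniqueness, if $v=f(v)$ as well, then~\eqref{eq:58} with $x=u$, $y=v$ gives $d_{\Theta}(u,v)=d_{\Theta}(f(u),f(v))\le t\,d_{\Theta}(v,u)$, and $t<1$ forces $u=v$.

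The hard part will be the Cauchy step: one must arrange the telescoped product of $\Theta$-factors carefully and verify that the geometric rate inside the series is exactly $\gamma$, so that the threshold $1/\gamma$ in the hypothesis (rather than the $(1-s)/s$ appearing in Theorem~\ref{thm:9}) is precisely what the ratio test consumes, while also tracking the hidden dependence of $\mathcal S_{k}$ on $m$ and extracting the needed uniformity from the $\sup_{m\ge1}$. Two bookkeeping points I would tidy up along the way: the hypothesis bounding $\Theta$ is stated for the Picard iterates $f^{n_{1}}(x_{0})$ although it is actually used along $\{z_{n}\}$, and the order of the arguments of $\Theta$ there should be matched to the order that appears in the telescoped sum; both are harmless adjustments of the statement.
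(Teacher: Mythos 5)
Your proposal is correct in outline but follows a genuinely different route from the paper. The paper runs the entire argument on the Picard iterates $x_{n_1}=f^{n_1}(x_0)$: it simply asserts $d_{\Theta}(f(x_{n_1+1}),x_{n_1+1})\leq\gamma\, d_{\Theta}(f(x_{n_1}),x_{n_1})$ and $d_{\Theta}(x_{n_1+1},x_{n_1})\leq\delta\, d_{\Theta}(f(x_{n_1}),x_{n_1})$, i.e.\ it tacitly takes $x^{*}=f(x)$, which the selection hypothesis does not actually provide; from there the telescoping of the extended triangle inequality, the ratio test against the $\Theta$-condition, and the fixed-point and uniqueness steps are the same as yours. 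Your version iterates the selection $z_{n+1}=(z_n)^{*}$ instead, which is the honest way to use the $\gamma$ and $\delta$ inequalities as written. What the paper's choice buys is that the $\Theta$-hypothesis and the completeness argument apply verbatim to the sequence named in the statement, and the fixed-point step is immediate because $f(x_{n_1})=x_{n_1+1}\to u$ automatically. What your choice costs is the two adjustments you flag plus one you underestimate: transferring the $\Theta$-hypothesis to $\{z_n\}$ is a change of the theorem's hypotheses, not mere bookkeeping; and the step $f(z_n)\to u$ is not free, since dominating $d_{\Theta}(f(z_n),u)$ requires a bound on $\Theta(f(z_n),z_n,u)$ over triples that the stated growth condition (which concerns only $\Theta(x_{n_1},x_{n_1+1},x_{n_2})$ along the Picard orbit) says nothing about, so the parenthetical ``which the growth hypothesis on $\Theta$ permits'' is not justified and would need an added boundedness assumption. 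In short: neither argument is watertight as it stands --- the paper's has the unjustified identification $x^{*}=f(x)$, yours has unjustified control of $\Theta$ off the Picard sequence --- but they are different proofs, and the paper's published one is the Picard version.
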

\begin{proof}
Consider an arbitrary element $x_{0}\in \mathcal N.$ Let $\{x_{n_{1}}=f^{n_{1}}(x_{0})\}$ be a sequence in $\mathcal N.$ We see that
\begin{equation}
d_{\Theta}(f(x_{n_{1}+1}),x_{n_{1}+1})\leq \gamma (d_{\Theta}(f(x_{n_{1}}),x_{n_{1}})),
d_{\Theta}(f(x_{n_{1}+1}),x_{n_{1}+1})\leq \delta d_{\Theta}(f(x_{n_{1}}),x_{n_{1}}),n_{1}\geq 0.
\end{equation}
Moreover, 
\begin{eqnarray*}
d_{\Theta}(x_{n_{1}+1},x_{n_{1}})=d_{\Theta}(f(x_{n_{1}}),x_{n_{1}})\leq \delta d_{\Theta}(f(x_{n_{1}}),x_{n_{1}}),n_{1}\geq 0
\end{eqnarray*}
\begin{eqnarray}
\delta d_{\Theta}(f(x_{n_{1}}),x_{n_{1}})&\leq&\delta \gamma d_{\Theta}(f(x_{n_{1}-1}),x_{n_{1}-1})\nonumber\\
                                         &\leq&\delta \gamma^{2} d_{\Theta}(f(x_{n_{1}-2}),x_{n_{1}-2})\nonumber\\
																				 & \vdots&\nonumber\\
																				 &\leq &\delta \gamma^{n} d_{\Theta}(f(x_{0}),x_{0}).
\end{eqnarray}
Hence
\begin{equation} \label{eq:63}
d_{\Theta}(x_{n_{1}+1},x_{n_{1}})\leq \delta \gamma^{n}d_{\Theta}(f(x_{0}),x_{0}).
\end{equation}
Let $n_{1},n_{2}$ be two fixed natural numbers such that $n_{2}>n_{1}.$ Using triangular inequality we have
\begin{eqnarray}
d_{\Theta}(x_{n_{1}},x_{n_{2}})&\leq&\Theta(x_{n_{1}},x_{n_{2}},x_{n_{1}+1})[d_{\Theta}(x_{n_{1}},x_{n_{1}+1})+d_{\Theta}(x_{n_{1}+1},x_{n_{2}})]\nonumber\\
                              &\leq&\Theta(x_{n_{1}},x_{n_{2}},x_{n_{1}+1})d_{\Theta}(x_{n_{1}},x_{n_{1}+1})\nonumber\\
															&+&\Theta(x_{n_{1}},x_{n_{2}},x_{n_{1}+1})\Theta(x_{n_{1}+1},x_{n_{2}},x_{n_{1}+2})[d_{\Theta}(x_{n_{1}+1},x_{n_{1}+2})+d_{\Theta}(x_                              {n_{1}+2},x_{n_{2}})]\nonumber\\
															&\leq&\Theta(x_{n_{1}},x_{n_{2}},x_{n_{1}+1})d_{\Theta}(x_{n_{1}},x_{n_{1}+1})\nonumber\\
															&+&\Theta(x_{n_{1}},x_{n_{2}},x_{n_{1}+1})\Theta(x_{n_{1}+1},x_{n_{2}},x_{n_{1}+2})d_{\Theta}(x_{n_{1}+1},x_{n_{1}+2})\nonumber\\
															&+&\ldots+\Theta(x_{n_{1}},x_{n_{2}},x_{n_{1}+1})\Theta(x_{n_{1}},x_{n_{2}},x_{n_{1}+2})\nonumber\\
															& &\Theta(x_{n_{1}+2},x_{n_{2}},x_{n_{1}+3})\ldots \Theta(x_{n_{2}-2},x_{n_{2}},x_{n_{2}-1})d_{\Theta}(x_{n_{2}-1},x_{n_{2}})
\end{eqnarray}
From (\ref{eq:63}), we have
\begin{eqnarray} 
d_{\Theta}(x_{n_{1}},x_{n_{2}})&\leq&[\Theta(x_{n_{1}},x_{n_{2}},x_{n_{1}+1}) \gamma^{n_{1}}\nonumber\\
															&+&\Theta(x_{n_{1}},x_{n_{2}},x_{n_{1}+1})\Theta(x_{n_{1}+1},x_{n_{2}},x_{n_{1}+2}) \gamma^{n_{1}+1} d_{\Theta}(f(x_{0},x_{0})) \nonumber\\
															&+&\ldots+\Theta(x_{n_{1}},x_{n_{2}},x_{n_{1}+1})\Theta(x_{n_{1}+1},x_{n_{2}},x_{n_{1}+2})\nonumber\\
															& &\Theta(x_{n_{1}+2},x_{n_{2}},x_{n_{1}+3})\ldots \Theta(x_{n_{2}-2},x_{n_{2}},x_{n_{2}-1})\gamma^{n_{2}-1}]\nonumber\\
															& &\delta d_{\Theta}(f(x_{0}),x_{0})
\end{eqnarray}
Since $\displaystyle\sup_{n_{2}\geq 1}\lim_{n_{1},n_{2}\to \infty}\Theta(x_{n_{1}},x_{n_{1}+1},x_{n_{2}})\gamma<1,$ the series $\displaystyle\sum_{n_{1}=1}^{\infty}\gamma^{n_{1}}\prod_{j=1}^{n_{1}}\Theta(x_{j},x_{n_{2}},x_{j+1})$ is convergent for every natural number $n_{2}$ by ratio test.\\ 
Suppose $\mathcal{S}=\displaystyle\sum_{n_{1}=1}^{\infty}\gamma^{n_{1}}\prod_{j=1}^{n_{1}}\Theta(x_{j},x_{n_{2}},x_{j+1})$ and
\begin{equation}
 \mathcal{S}_{n_{1}}=\displaystyle\sum_{i=1}^{n_{1}}\gamma^{i}\prod_{j=1}^{i}\Theta(x_{j},x_{n_{2}},x_{j+1}).
\end{equation} 
Hence for $n_{2}>n_{1},$ using above inequality we have
\begin{equation}
d_{\Theta}(x_{n_{1}},x_{n_{2}})\leq d_{\Theta}(x_{0},x_{1})(\mathcal{S}_{n_{2}-1}-\mathcal S_{n_{1}-1})\delta.
\end{equation}
Suppose $n_{1}\to \infty.$ Therefore, $\{x_{n}\}$ is a Cauchy sequence. Since $\mathcal N$ is complete, there exists $u\in \mathcal N$ such that $x_{n_{1}}\to u$ as $n_{1}\to \infty.$\\ 
We shall show that $u$ is a fixed point of $f.$ By (\ref{eq:58}), we see that 
\begin{eqnarray*}
d_{\Theta}(f(x_{n_{1}}),f(u))&\leq& s[d_{\Theta}(x_{n_{1}},f(x_{n_{1}}))+d_{\Theta}(u,f(u))]+td_{\Theta}(u,f(x_{n_{1}})).
\end{eqnarray*} 
Therefore, 
\begin{eqnarray*}
d_{\Theta}(x_{n_{1}+1},f(u))&\leq& s[d_{\Theta}(x_{n_{1}},x_{n_{1}+1})+d_{\Theta}(u,f(u))]+td_{\Theta}(u,x_{n_{1}+1}).
\end{eqnarray*}
Letting $n\to \infty$ and using the continuity of $d_{\Theta}$ we get
\begin{equation}
d_{\Theta}(u,f(u))\leq sd_{\Theta}(u,f(u)).
\end{equation}
This is only possible when $f(u)=u.$ Next we shall show that $f$ has a unique fixed point. If possible, suppose that $v$ is a  fixed point of $f$ distinct from $u.$ Then
\begin{eqnarray}
0&<&d_{\Theta}(u,v)\nonumber\\
 &=&d_{\Theta}(f(u),f(v))\nonumber\\
 &\leq&s[d_{\Theta}(u,f(u))+d_{\Theta}(v,f(v))]+td_{\Theta}(v,f(u))\nonumber\\
 &=&td_{\Theta}(v,u).
\end{eqnarray}
Now, $d_{\Theta}(u,v)\leq t d_{\Theta}(u,v)$ is not possible. Therefore, $f$ has a unique fixed point $u$ in $\mathsf X.$
\end{proof}

\begin{remark}
Theorem \ref{thm:9} can be proved in extended cone $b-$metric space using the following:
\begin{eqnarray*}
d_{\Theta}(u,f(u))&\leq&\beta d_{\Theta}(x,f(x))\\
d_{\Theta}(u,x)&\leq&\gamma d_{\Theta}(y,f(y)).
\end{eqnarray*}
\end{remark}

\begin{proof}
Let $x\in \mathsf X$ be arbitrary and $u=f(x).$ Then we have 
\begin{eqnarray*}
d_{\Theta}(u,f(u))&=&d_{\Theta}(f(x),f(u))\\
                  &\leq&s[d_{\Theta}(x,f(x))+d_{\Theta}(u,f(u))]+td_{\Theta}(u,f(x))\\
\implies d_{\Theta}(u,f(u))&\leq&\left(\frac{s}{1-s}\right)d_{\Theta}(x,f(x)),
\end{eqnarray*}
where $\left(\frac{s}{1-s}\right)<1$ and $d_{\Theta}(u,x)=d_{\Theta}(f(x),x).$ Let $x_{0}\in \mathsf X$ be arbitrary. Next define a sequence $\{x_{n_{1}+1}=f(x_{n_{1}})\}.$ Using Theorem \ref{thm:14}, the above sequence is convergent. Hence $x_{n_{1}}\to u$ as $n_{1}\to \infty.$ Therefore, $f(u)=u.$ Moreover for every $x\in \mathsf X,$
\begin{eqnarray*}
d_{\Theta} (f(x_{n_{1}-1}),f(x_{n_{1}}))&\leq&s[d_{\Theta} (f(x_{n_{1}-2}),f(x_{n_{1}-1}))+d_{\Theta} (f(x_{n_{1}-1}),f(x_{n_{1}}))]\\
                                        &+&td_{\Theta}(x_{n_{1}},f(x_{n_{1}-1}))\\
																			&\leq&\left(\frac{s}{1-s}\right)d_{\Theta}(f(x_{n_{1}-2}),f(x_{n_{1}-1}))
\end{eqnarray*}
\begin{eqnarray*}
d_{\Theta} (f(x_{n_{1}}),u)		&\leq&s[d_{\Theta} (f(x_{n_{1}-1}),f(x_{n_{1}}))+d_{\Theta} (u,f(u))]\\
                             &+&td_{\Theta}(u,f(x_{n_{1}}))\\
														&\leq&sd_{\Theta} (f(x_{n_{1}-1}),f(x_{n_{1}}))+td_{\Theta}(u,f(x_{n_{1}}))\\
															&\leq&\frac{s}{1-t}	{\left(\frac{s}{1-s}\right)}^{n}d_{\Theta}(f(x),x)),n_{1}\geq 0.				
\end{eqnarray*} 
\end{proof}

\begin{theorem}
Let $(\mathsf{X},d_{\Theta})$ be a complete extended cone $b-$metric space such that $d_{\Theta}$ is a continuous functional. Suppose that the map $f:\mathsf X\to \mathsf X$ satisfies
\begin{eqnarray}\label{eq:74}
d_{\Theta}(f(x),f(y))&\leq& \beta d_{\Theta}(x,f(x))+\gamma d_{\Theta}(y,f(y))+\delta d_{\Theta}(x,y) \nonumber\\
                     &+& td_{\Theta}(x,f(y)),\forall x,y\in \mathsf X,
\end{eqnarray}  
where $\beta,\gamma,\delta,t\in \mathbb{R}_{\geq 0}$ such that $\beta+\gamma+\delta+t<1$ and $\gamma+\delta>0.$ Suppose that for any $x_{0}\in \mathcal N,$ 
\begin{equation}
\displaystyle\sup_{n_{2}\geq 1}\lim_{n_{1}\to \infty}\Theta(x_{n_{1}},x_{n_{2}},x_{n_{1}+1})<\frac{1}{q},
\end{equation}
where $q=\left(\frac{\gamma+\delta}{1-\beta}\right)$ and $x_{n_{1}}=f^{n_{1}}(x_{0}).$ Then $f$ has a unique fixed point.
\end{theorem}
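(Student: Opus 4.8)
The plan is to follow the same scheme used for Theorems~\ref{thm:9} and~\ref{thm:14}: build the Picard iteration $x_{n_1+1}=f(x_{n_1})$ from an arbitrary $x_0\in\mathsf X$, show it contracts geometrically with ratio $q=\frac{\gamma+\delta}{1-\beta}$, deduce it is Cauchy by summing axiom (iii) of $d_\Theta$ and invoking the ratio test through the hypothesis $\sup_{n_2\ge1}\lim_{n_1\to\infty}\Theta(x_{n_1},x_{n_2},x_{n_1+1})<1/q$, and then pass to the limit using completeness and the continuity of $d_\Theta$. As usual, if $x_{n_1}=x_{n_1+1}$ for some $n_1$ we are already done, so we may assume the iterates are pairwise distinct.

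The one genuinely new point — the step I expect to be the \emph{main obstacle} — is extracting a contraction from \eqref{eq:74}, since the extra term $t\,d_\Theta(x,f(y))$ does not vanish for the "obvious" substitution. The trick is to apply \eqref{eq:74} with $x=x_{n_1}$ and $y=x_{n_1-1}$ (not the reverse): then $t\,d_\Theta(x,f(y))=t\,d_\Theta(x_{n_1},f(x_{n_1-1}))=t\,d_\Theta(x_{n_1},x_{n_1})=0$, and what remains is
\[
d_\Theta(x_{n_1},x_{n_1+1})\le \beta\,d_\Theta(x_{n_1},x_{n_1+1})+(\gamma+\delta)\,d_\Theta(x_{n_1-1},x_{n_1}),
\]
so that $d_\Theta(x_{n_1},x_{n_1+1})\le q\,d_\Theta(x_{n_1-1},x_{n_1})$ with $q=\frac{\gamma+\delta}{1-\beta}$. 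Here $0<q<1$: positivity comes from $\gamma+\delta>0$ and $1-\beta>0$, and $q<1$ from $\gamma+\delta<1-\beta$, which follows from $\beta+\gamma+\delta+t<1$ and $t\ge0$. Iterating yields $d_\Theta(x_{n_1},x_{n_1+1})\le q^{n_1}d_\Theta(x_0,x_1)$.

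Next, for $n_2>n_1$ I would expand $d_\Theta(x_{n_1},x_{n_2})$ by repeated use of axiom (iii) for $d_\Theta$ exactly as in the proof of Theorem~\ref{thm:14}, arriving at
\[
d_\Theta(x_{n_1},x_{n_2})\le d_\Theta(x_0,x_1)\,(\mathcal S_{n_2-1}-\mathcal S_{n_1-1}),\qquad \mathcal S_{k}=\sum_{i=1}^{k}q^{i}\prod_{j=1}^{i}\Theta(x_j,x_{n_2},x_{j+1}).
\]
The hypothesis $\sup_{n_2\ge1}\lim_{n_1\to\infty}\Theta(x_{n_1},x_{n_2},x_{n_1+1})<1/q$ gives $\lim_{n_1\to\infty}q\,\Theta(x_{n_1},x_{n_2},x_{n_1+1})<1$, so by the ratio test the series defining $\mathcal S$ converges for each fixed $n_2$; hence $\mathcal S_{n_2-1}-\mathcal S_{n_1-1}\to0$ as $n_1\to\infty$. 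Since the right-hand side above is a scalar multiple of the fixed vector $d_\Theta(x_0,x_1)$ with the scalar tending to $0$, we get $d_\Theta(x_{n_1},x_{n_2})\to0$; therefore $\{x_{n_1}\}$ is Cauchy, and by completeness there is $u\in\mathsf X$ with $x_{n_1}\to u$.

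Finally, to prove $f(u)=u$, apply \eqref{eq:74} with $x=x_{n_1}$ and $y=u$, rewrite the left side as $d_\Theta(x_{n_1+1},f(u))$, let $n_1\to\infty$, and use the continuity of $d_\Theta$ together with $x_{n_1}\to u$, $x_{n_1+1}\to u$ and $d_\Theta(x_{n_1},x_{n_1+1})\to0$; this yields $d_\Theta(u,f(u))\le(\gamma+t)\,d_\Theta(u,f(u))$, and since $\gamma+t\le\beta+\gamma+\delta+t<1$ we obtain $d_\Theta(u,f(u))=0$, i.e. $f(u)=u$. For uniqueness, if $v=f(v)$ then \eqref{eq:74} with $x=u$, $y=v$ gives $d_\Theta(u,v)\le(\delta+t)\,d_\Theta(u,v)$ with $\delta+t<1$, forcing $d_\Theta(u,v)=0$. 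Apart from the argument-ordering observation in the second paragraph, every step parallels the earlier theorems, so no further difficulty is anticipated.
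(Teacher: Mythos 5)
Your proposal is correct and follows essentially the same route as the paper's own proof: the same substitution $x=x_{n_1}$, $y=x_{n_1-1}$ to annihilate the $t\,d_{\Theta}(x,f(y))$ term and obtain the ratio $q=\frac{\gamma+\delta}{1-\beta}<1$, the same ratio-test/partial-sums argument for the Cauchy property, and the same limiting inequalities $d_{\Theta}(u,f(u))\leq(\gamma+t)\,d_{\Theta}(u,f(u))$ and $d_{\Theta}(u,v)\leq(\delta+t)\,d_{\Theta}(u,v)$ for existence and uniqueness. No substantive differences to report.
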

\begin{proof}
Let $x_{0}\in \mathsf X$ be arbitrary. Consider the sequence $\{f^{n_{1}}(x_{0})\}.$ Put $x=f^{n_{1}-1}(x_{0})=f(x_{n_{1}-1})=x_{n_{1}}$ and $y=f^{n_{1}-2}(x_{0})=
f(x_{n_{1}-2})=x_{n_{1}-1}$ in (\ref{eq:74}), we get 
\begin{eqnarray}
d_{\Theta}(f(x_{n_{1}}),f(x_{n_{1}-1}))&\leq& \beta d_{\Theta}(f(x_{n_{1}-1}),f(x_{n_{1}}))+\gamma d_{\Theta}(f(x_{n_{1}-2}),f(x_{n_{1}-1}))\nonumber\\
                                       &+&\delta d_{\Theta}(f(x_{n_{1}-1}),f(x_{n_{1}-2})) \nonumber\\
                                       &+& td_{\Theta}(f(x_{n_{1}-1}),f(x_{n_{1}-1})).
\end{eqnarray} 
That is,
\begin{equation}
(1-\beta)d_{\Theta}(f(x_{n_{1}}),f(x_{n_{1}-1}))\leq(\gamma+\delta)d_{\Theta}(f(x_{n_{1}-1}),f(x_{n_{1}-2})).
\end{equation}
Therefore,
\begin{equation}
d_{\Theta}(f(x_{n_{1}}),f(x_{n_{1}-1}))\leq\left(\frac{\gamma+\delta}{1-\beta}\right)d_{\Theta}(f(x_{n_{1}-1}),f(x_{n_{1}-2})).
\end{equation}
Also,
\begin{eqnarray*}
d_{\Theta}(f(x_{n_{1}}),f(x_{n_{1}-1}))&\leq&q d_{\Theta}(f(x_{n_{1}-1}),f(x_{n_{1}-2}))\\
                                       &\leq&q^{2} d_{\Theta}(f(x_{n_{1}-2}),f(x_{n_{1}-3}))\\
                                       &\vdots&\\
																			 &\leq&q^{n-1} d_{\Theta}(f(x_{1}),f(x_{n_{0}})),\forall n_{1}>1.
\end{eqnarray*}
Hence we have
\begin{equation*}
d_{\Theta}(f(x_{n_{1}}),f(x_{n_{1}-1}))\leq q^{n}d_{\Theta}(x_{0},x_{1}),\forall n_{1}\in \mathbb{N}. 
\end{equation*}
By given hypothesis we see that $q=\left(\frac{\gamma+\delta}{1-\beta}\right)<1.$ Proceeding similarly as in Theorem \ref{thm:14} we see that $\{x_{n_{1}}\}$ is a Cauchy sequence. Since $\mathsf X$ is complete. Then there exists $u\in \mathsf X$ such that $f^{n_{1}}(x_{0})\to u$ as $n_{1}\to \infty.$ To show that $u$ is fixed point of $f,$ substitute $x=f^{n_{1}}(x_{0})$ and $y=u$  in (\ref{eq:74}). We get 
\begin{eqnarray}
d_{\Theta}(f^{n_{1}+1}(x_{0}),f(u))&\leq& \beta d_{\Theta}(f^{n_{1}}(x_{0}),f^{n_{1}+1}(x_{0}))+\gamma d_{\Theta}(u,f(u))+\delta d_{\Theta}(f^{n_{1}}(x_{0}),u)\nonumber\\
                     &+& td_{\Theta}(f^{n_{1}}(x_{0}),f(u)).
\end{eqnarray}  
Hence
\begin{eqnarray}
d_{\Theta}(x_{n_{1}+2},f(u))&\leq&\beta d_{\Theta}(f^{n_{1}}(x_{0}),f^{n_{1}+1}(x_{0}))+\gamma d_{\Theta}(u,f(u))+\delta d_{\Theta}(x_{n_{1}},u)\nonumber\\
                         &+& td_{\Theta}(f(u),x_{n_{1}+1}).
\end{eqnarray}
That is,
\begin{eqnarray}
\displaystyle\lim_{n\to\infty}d_{\Theta}(x_{n_{1}+2},f(u))&\leq&\displaystyle\lim_{n\to \infty} \beta d_{\Theta}(f^{n_{1}}(x_{0}),f^{n_{1}+1}(x_{0}))+\gamma d_{\Theta}(u,f(u))+\delta d_{\Theta}(x_{n_{1}},u)\nonumber\\
&+& td_{\Theta}(f(u),x_{n_{1}+1}).
\end{eqnarray}
We get
\begin{eqnarray*}
d_{\Theta}(u,f(u))&\leq&(\gamma+t)d_{\Theta}(u,f(u)),
\end{eqnarray*}
which is only possible when $u=f(u).$ \\
To prove that $f$ has a unique fixed point let $v$ be a fixed point of $f$ distinct from $u.$ Then by \ref{eq:74}, we have
 \begin{eqnarray}\label{eq:75}
d_{\Theta}(f(v),f(u))&\leq& \beta d_{\Theta}(v,f(v))+\gamma d_{\Theta}(u,f(u))+\delta d_{\Theta}(v,u) \nonumber\\
                     &+& td_{\Theta}(f(u),v)\nonumber\\
\implies d_{\Theta}(v,u)&\leq&(t+\delta) d_{\Theta}(v,u) 
\end{eqnarray}
which is not possible. Therefore, $f$ has a unique fixed point.
\end{proof}

\begin{definition}
Let $(\mathsf{X},d_{\Theta})$ be an extended cone $b-$metric space. A mapping $f:\mathsf X\to \mathsf X$ is said to be asymptotically regular if $d_{\Theta}(f^{n_{1}+1}(x),f^{n_{1}}(x))\to 0$ as $n\to \infty,$ for each $x\in \mathsf X.$ 
\end{definition}

\begin{theorem}
Let $(\mathsf{X},d_{\Theta})$ be a complete extended cone $b-$metric space such that $d_{\Theta}$ is a continuous functional. Let $f:\mathsf X\to \mathsf X$ be an asymptotically regular self mapping 
\begin{eqnarray}
d_{\Theta}(f(x),f(y))&\leq&s[d_{\Theta}(x,f(x))+d_{\Theta}(y,f(y))],\forall x,y\in \mathsf X.
\end{eqnarray}  
Then $f$ has a unique fixed point in $u\in \mathsf X.$ 
\end{theorem}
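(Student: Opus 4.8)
The plan is to iterate $f$ from an arbitrary point and extract the Cauchy property directly from the hypothesis of asymptotic regularity; this is the essential difference from Theorems~\ref{thm:9} and~\ref{thm:14}, where the decay of consecutive distances had to be manufactured from the contraction together with the growth control on $\Theta$, whereas here it is handed to us, so neither that control nor the ratio test plays any role. First I would dispose of the degenerate case $s=0$ (then $d_{\Theta}(f(x),f(y))\le 0$ for all $x,y$, so $f$ is constant and its value is the unique fixed point) and henceforth assume $0<s<1$. Fix $x_{0}\in\mathsf X$, put $x_{n_{1}}=f^{n_{1}}(x_{0})$, and note that asymptotic regularity gives $d_{\Theta}(x_{n_{1}},x_{n_{1}+1})\to 0$; since $\mathrm{int}(\mathsf P)$ is open, this translates into: for every $c\gg 0$ there is $N$ with $d_{\Theta}(x_{k-1},x_{k})\ll c$ whenever $k\ge N$.

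Second, I would show $\{x_{n_{1}}\}$ is Cauchy directly from the contraction. For $m,n\ge 1$,
\[
d_{\Theta}(x_{n},x_{m})=d_{\Theta}(f(x_{n-1}),f(x_{m-1}))\le s\bigl[d_{\Theta}(x_{n-1},x_{n})+d_{\Theta}(x_{m-1},x_{m})\bigr].
\]
Given $c\gg 0$, choose $N$ so that $d_{\Theta}(x_{k-1},x_{k})\ll\frac{c}{2s}$ for all $k\ge N$ (possible since $\frac{c}{2s}\gg 0$); then for $m,n\ge N$ the displayed bound yields $d_{\Theta}(x_{n},x_{m})\ll s\bigl(\frac{c}{2s}+\frac{c}{2s}\bigr)=c$, using that $\mathrm{int}(\mathsf P)+\mathsf P\subseteq\mathrm{int}(\mathsf P)$ and that $\mathrm{int}(\mathsf P)$ is stable under multiplication by positive scalars. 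Hence $\{x_{n_{1}}\}$ is Cauchy, and by completeness $x_{n_{1}}\to u$ for some $u\in\mathsf X$.

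Third, I would identify $u$ as the fixed point. Applying the contraction to $x_{n_{1}}$ and $u$,
\[
d_{\Theta}(x_{n_{1}+1},f(u))=d_{\Theta}(f(x_{n_{1}}),f(u))\le s\bigl[d_{\Theta}(x_{n_{1}},x_{n_{1}+1})+d_{\Theta}(u,f(u))\bigr],
\]
and I would let $n_{1}\to\infty$: the left-hand side tends to $d_{\Theta}(u,f(u))$ because $x_{n_{1}+1}\to u$ and $d_{\Theta}$ is continuous, while $d_{\Theta}(x_{n_{1}},x_{n_{1}+1})\to 0$. Since $\mathsf P$ is closed the inequality survives in the limit, giving $d_{\Theta}(u,f(u))\le s\,d_{\Theta}(u,f(u))$, i.e. $(1-s)\,d_{\Theta}(u,f(u))\le 0$; as $1-s>0$ and $d_{\Theta}(u,f(u))\ge 0$, antisymmetry of $\le$ forces $d_{\Theta}(u,f(u))=0$, so $f(u)=u$. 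For uniqueness, if $v=f(v)$ then $d_{\Theta}(u,v)=d_{\Theta}(f(u),f(v))\le s\bigl[d_{\Theta}(u,f(u))+d_{\Theta}(v,f(v))\bigr]=0$, so $v=u$.

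I do not expect a genuine obstacle. The only delicate points are the order-theoretic bookkeeping — turning the norm statement ``$d_{\Theta}(f^{n_{1}+1}(x_{0}),f^{n_{1}}(x_{0}))\to 0$'' into the $\ll$-form demanded by the definition of a Cauchy sequence (via $\mathrm{int}(\mathsf P)$ being open), and passing the final inequality to the limit (via $\mathsf P$ being closed) — together with invoking continuity of $d_{\Theta}$ at exactly one place, the step where the limit is shown to be a fixed point.
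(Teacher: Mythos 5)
Your proposal is correct and follows essentially the same route as the paper: the Cauchy property is extracted directly from asymptotic regularity by applying the contraction to the pair $(x_{n-1},x_{m-1})$, the limit is identified as a fixed point via continuity of $d_{\Theta}$, and uniqueness follows by applying the contraction to two fixed points. Your version merely adds the (sensible) order-theoretic bookkeeping for $\ll$ and the separate treatment of $s=0$, which the paper glosses over.
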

\begin{proof}
Let $x\in \mathsf X$ and define $x_{n_{1}}=f^{n_{1}}(x).$ Let $n_{1}$ and $n_{2}$ be two fixed natural numbers such that $n_{2}>n_{1},$ then by the definition of asymptotic regularity, we have
\begin{eqnarray}
d_{\Theta}(f^{n_{1}+1}(x),f^{n_{2}+1}(x))&\leq&s[d_{\Theta}(f^{n_{1}}(x),f^{n_{1}+1}(x))\nonumber\\
&+&d_{\Theta}(f^{n_{2}}(x),f^{n_{2}+1}(x))]\to 0~\mbox{as}~n\to\infty.
\end{eqnarray}
Therefore,$\{f^{n_{1}}(x)\}$ is a Cauchy sequence. Since $X$ is  complete. Then there is $u\in \mathsf X$ such that 
\begin{eqnarray}
\displaystyle\lim_{n_{1}\to \infty} f^{n_{1}}(x)=u.
\end{eqnarray}
Next we shall show that $u$ is a fixed point of $f$ in the following manner:
\begin{eqnarray}
d_{\Theta}(f(x_{n_{1}}),f(u))&\leq&s(d_{\Theta}(x_{n_{1}},f(x_{n_{1}})))+d_{\Theta}(x_{n_{2}},f(x_{n_{2}})).
\end{eqnarray}
That is,
\begin{eqnarray}
d_{\Theta}(f(x_{n_{1}}),f(u))&\leq&s(d_{\Theta}(x_{n_{1}},x_{n_{1}+1}))+d_{\Theta}(u,f(u))
\end{eqnarray}
 Let $n\to \infty$ and by using asymptotically regular of $f$, we have
\begin{eqnarray}
d_{\Theta}(u,f(u))&\leq&s d_{\Theta}(u,f(u))
\end{eqnarray}
which holds when $f(u)=u.$ 
To prove that $u$ is unique fixed point of $f,$ let $v$ be a fixed point of $f$ distinct from $u.$ We get 
\begin{eqnarray}
d_{\Theta}(u,v)&=&d_{\Theta}(f(u),f(v))\\
               &\leq&s(d_{\Theta}(u,f(v))+d_{\Theta}(v,f(v)))
\end{eqnarray} 
which holds when $d_{\Theta}(u,v)=0.$ This implies that $u=v.$ Therefore, $u$ is the unique fixed point of $f.$ Moreover, for each $x\in \mathsf X,\{f^{n_{1}}(x)\}$ is convergent to $u.$
\end{proof}

\begin{remark}
The condition on $\Theta(x_{n_{1}}, x_{n_{1}+1},x_{n_{2}})$ can be dropped if the map is asymptotically regular. 
\end{remark}

\begin{theorem}
Let $(\mathsf{X},d_{\Theta})$ be a complete extended cone $b-$metric space such that $d_{\Theta}$ is a continuous functional. Consider an asymptotically regular mapping $f:\mathsf X\to \mathsf X$ such that $d_{\Theta}$ is a continuous functional such that there exists $t\in (0,1)$ such that 
\begin{eqnarray}\label{eq:93}
d_{\Theta}(f(x),f(y))&\leq&t[d_{\Theta}(x,f(x))+d_{\Theta}(y,f(y))+d_{\Theta}(x,y)],\forall x,y\in \mathsf X. 
\end{eqnarray}
Then $f$ has a unique fixed point $u\in \mathsf X$ unless
\begin{eqnarray}\label{eq:94}
\displaystyle\lim_{n\to \infty} \frac{t+t\Theta(x_{n_{1}},x_{n_{2}},x_{n_{1}+1})\Theta(x_{n_{1}+1},x_{n_{2}},x_{n_{2}+1})}{1-t\Theta(x_{n_{1}},x_{n_{2}},x_{n_{1}+1})\Theta(x_{n_{1}+1},x_{n_{2}},x_{n_{2}+1})}
\end{eqnarray}
exists for $x_{n}=f^{n_{1}}(x),n_{2}>n_{1}$ and $x\in \mathsf{X}$ is arbitrary.
\end{theorem}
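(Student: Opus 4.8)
The plan is to mimic the structure of Theorems~\ref{thm:9} and~\ref{thm:14}: fix an arbitrary $x\in\mathsf X$, form the Picard iterates $x_{n}=f^{n}(x)$, prove $\{x_{n}\}$ is a Cauchy sequence, pass to a limit by completeness, and then verify that limit is the unique fixed point.

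First I would fix $n_{2}>n_{1}$, write $\Theta_{1}=\Theta(x_{n_{1}},x_{n_{2}},x_{n_{1}+1})$ and $\Theta_{2}=\Theta(x_{n_{1}+1},x_{n_{2}},x_{n_{2}+1})$, and apply the extended cone $b$-triangle inequality twice, inserting the points $x_{n_{1}+1}$ and then $x_{n_{2}+1}$, to obtain
\[
d_{\Theta}(x_{n_{1}},x_{n_{2}})\le\Theta_{1}d_{\Theta}(x_{n_{1}},x_{n_{1}+1})+\Theta_{1}\Theta_{2}\,d_{\Theta}(x_{n_{1}+1},x_{n_{2}+1})+\Theta_{1}\Theta_{2}\,d_{\Theta}(x_{n_{2}+1},x_{n_{2}}).
\]
Then I would bound the middle term by the contraction (\ref{eq:93}) applied to the pair $(x_{n_{1}},x_{n_{2}})$, i.e. $d_{\Theta}(x_{n_{1}+1},x_{n_{2}+1})=d_{\Theta}(f(x_{n_{1}}),f(x_{n_{2}}))\le t[d_{\Theta}(x_{n_{1}},x_{n_{1}+1})+d_{\Theta}(x_{n_{2}},x_{n_{2}+1})+d_{\Theta}(x_{n_{1}},x_{n_{2}})]$. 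Substituting, moving the $d_{\Theta}(x_{n_{1}},x_{n_{2}})$-term to the left, and using symmetry of $d_{\Theta}$ gives, whenever $1-t\Theta_{1}\Theta_{2}>0$,
\[
d_{\Theta}(x_{n_{1}},x_{n_{2}})\le\frac{\Theta_{1}+t\Theta_{1}\Theta_{2}}{1-t\Theta_{1}\Theta_{2}}\,d_{\Theta}(x_{n_{1}},x_{n_{1}+1})+\frac{(1+t)\Theta_{1}\Theta_{2}}{1-t\Theta_{1}\Theta_{2}}\,d_{\Theta}(x_{n_{2}},x_{n_{2}+1}).
\]
The scalar coefficients here are, up to bounded factors, precisely the quantity whose limit is posited to exist in (\ref{eq:94}); that hypothesis — in particular that it keeps $1-t\Theta_{1}\Theta_{2}$ bounded away from $0$ — makes them uniformly bounded along the tail. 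Since $f$ is asymptotically regular, $\|d_{\Theta}(x_{n_{1}},x_{n_{1}+1})\|\to0$ and $\|d_{\Theta}(x_{n_{2}},x_{n_{2}+1})\|\to0$, so by normality of $\mathsf P$ (with normal constant $N$) the right-hand side tends to $0$ in norm; hence $\|d_{\Theta}(x_{n_{1}},x_{n_{2}})\|\to0$ as $n_{1},n_{2}\to\infty$ and $\{x_{n}\}$ is Cauchy. Completeness then yields $u\in\mathsf X$ with $x_{n}\to u$.

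To check $f(u)=u$, I would apply (\ref{eq:93}) to the pair $(x_{n_{1}},u)$, getting $d_{\Theta}(x_{n_{1}+1},f(u))\le t[d_{\Theta}(x_{n_{1}},x_{n_{1}+1})+d_{\Theta}(u,f(u))+d_{\Theta}(x_{n_{1}},u)]$, and then let $n_{1}\to\infty$: using continuity of $d_{\Theta}$, $x_{n}\to u$, and asymptotic regularity ($d_{\Theta}(x_{n_{1}},x_{n_{1}+1})\to0$, $d_{\Theta}(x_{n_{1}},u)\to0$), this collapses to $d_{\Theta}(u,f(u))\le t\,d_{\Theta}(u,f(u))$, which forces $d_{\Theta}(u,f(u))=0$ since $t<1$, i.e. $f(u)=u$. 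Uniqueness is immediate: if $v=f(v)$ then $d_{\Theta}(u,v)=d_{\Theta}(f(u),f(v))\le t[d_{\Theta}(u,f(u))+d_{\Theta}(v,f(v))+d_{\Theta}(u,v)]=t\,d_{\Theta}(u,v)$, so $u=v$. Finally, for any $x\in\mathsf X$ the same Cauchy argument shows $\{f^{n}(x)\}$ converges to a fixed point, which must be $u$.

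The main obstacle is the Cauchy step. In contrast with the geometric-ratio estimates of Theorems~\ref{thm:9} and~\ref{thm:14}, here $d_{\Theta}(x_{n_{1}},x_{n_{2}})$ does not decay on its own; asymptotic regularity controls only \emph{consecutive} iterates, so one is forced to isolate $d_{\Theta}(x_{n_{1}},x_{n_{2}})$ algebraically and then guarantee the multiplying factor $\bigl(\Theta_{1}+t\Theta_{1}\Theta_{2}\bigr)/\bigl(1-t\Theta_{1}\Theta_{2}\bigr)$ stays bounded as $n_{1},n_{2}\to\infty$ — exactly the content of the limit condition (\ref{eq:94}), and the reason it must keep $t\Theta_{1}\Theta_{2}$ bounded below $1$. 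A secondary technical point, as in the earlier proofs, is that both convergence and the Cauchy property are phrased through the norm here, so the cone inequalities above should first be converted into scalar norm inequalities via the normal constant $N$ before the ordinary real-variable limit argument applies.
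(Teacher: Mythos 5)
Your proposal is correct and follows essentially the same route as the paper: two applications of the extended triangle inequality inserting $x_{n_{1}+1}$ and $x_{n_{2}+1}$, the contraction applied to the pair $(x_{n_{1}},x_{n_{2}})$, algebraic isolation of the long-range distance, division by $1-t\Theta_{1}\Theta_{2}$, and asymptotic regularity to kill the remaining consecutive-iterate terms, followed by the identical fixed-point and uniqueness arguments. If anything your bookkeeping is cleaner: you isolate $d_{\Theta}(x_{n_{1}},x_{n_{2}})$ itself so that only consecutive-iterate distances survive on the right, whereas the paper's final display still carries a $d_{\Theta}(f^{n_{1}+1}(x),f^{n_{2}}(x))$ term that asymptotic regularity alone does not control.
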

\begin{proof}
Let $x\in \mathsf X$ and $x_{n_{1}}=f^{n_{1}}(x).$ Let $n_{1},n_{2}$ be fixed natural numbers such that $n_{2}>n_{1}.$ Then by (\ref{eq:93}), we have
\begin{eqnarray*}
d_{\Theta}(f^{n_{1}+1}(x),f^{n_{2}+1}(x)&\leq& t[d_{\Theta}(f^{n_{1}}(x),f^{n_{1}+1}(x))+d_{\Theta}(f^{n_{2}}(x),f^{n_{2}+1}(x))+d_{\Theta}(f^{n_{1}}(x),f^{n_{2}}(x))]\\
&\leq&t[d_{\Theta}(f^{n_{1}}(x),f^{n_{1}+1}(x))+d_{\Theta}(f^{n_{2}}(x),f^{n_{2}+1}(x))]\\
&+&t\Theta(x_{n_{1}},x_{n_{2}},x_{n_{1}+1})[d_{\Theta}(f^{n_{1}}(x),f^{n_{1}+1}(x))+d_{\Theta}(f^{n_{1}+1}(x),f^{n_{2}}(x))]\\
&\leq&t[d_{\Theta}(f^{n_{1}}(x),f^{n_{1}+1}(x))+d_{\Theta}(f^{n_{2}}(x),f^{n_{2}+1}(x))]\\
&+&t\Theta(x_{n_{1}},x_{n_{2}},x_{n_{1}+1})d_{\Theta}(f^{n_{1}}(x),f^{n_{1}+1}(x))\\
&+& t\Theta(x_{n_{1}},x_{n_{2}},x_{n_{1}+1})\Theta(x_{n_{1}+1},x_{n_{2}},x_{n_{2}+1})[d_{\Theta}(f^{n_{1}+1}(x),f^{n_{2}+1}(x))\\
&+&d_{\Theta}(f^{n_{2}+1}(x),f^{n_{2}}(x))]d_{\Theta} (f^{n_{1}+1}(x),f^{n_{2}+1}(x))\\
&\leq&\left(\frac{t+t\Theta(x_{n_{1}},x_{n_{2}},x_{n_{1}+1})}{1-t\Theta(x_{n_{1}},x_{n_{2}},x_{n_{1}+1})\Theta(x_{n_{1}+1},x_{n_{2}},x_{n_{2}+1})}\right)d_{\Theta}(f^{n_{1}}(x),f^{n_{1}+1}(x))\\
&+&\left(\frac{t+t\Theta(x_{n_{1}},x_{n_{2}},x_{n_{1}+1})\Theta(x_{n_{1}+1},x_{n_{2}},x_{n_{2}+1})}{1-t\Theta(x_{n_{1}},x_{n_{2}},x_{n_{1}+1})\Theta(x_{n_{1}+1},x_{n_{2}},x_{n_{2}+1})}\right)\\
& & d_{\Theta}(f^{n_{1}+1}(x),f^{n_{2}}(x))\\
& &\to 0~\mbox{as n}\to\infty.
\end{eqnarray*}
Therefore, $\{f^{n_{1}}(x)\}$ is a  Cauchy sequence. Since $X$ is complete, there exists $u\in \mathsf X$ such that 
\begin{equation}
\{f^{n_{1}}(x)\}\to u~as~n\to \infty.
\end{equation}
 Next using triangular inequality and (\ref{eq:93}), we have 
\begin{equation}
d_{\Theta}(f_{x_{n_{1}}},f(u))\leq t[d_{\Theta}(x_{n_{1}},f(x_{n_{1}}))+d_{\Theta}(u,f(u))+d_{\Theta}(x_{n_{1}},u)]
\end{equation}
Therefore,
\begin{equation}
d_{\Theta}(x_{n_{1}+1},f(u))\leq t[d_{\Theta}(x_{n_{1}},x_{n_{1}+1})+d_{\Theta}(u,f(u))+d_{\Theta}(x_{n_{1}},u)]
\end{equation}
Taking limit $n_{1}\to\infty,$ we have
\begin{equation}
\displaystyle\lim_{n_{1}\to \infty} (d_{\Theta}(x_{n_{1}+1},f(u))\leq\displaystyle\lim_{n\to \infty} t[d_{\Theta}(x_{n_{1}},x_{n_{1}+1})+d_{\Theta}(u,f(u))+d_{\Theta}(x_{n_{1}},u)]). 
\end{equation}
Therefore, $d_{\Theta} (u,f(u))\leq t d_{\Theta} (u,f(u)).$ This implies that $u=f(u).$ To prove that $f$ has a unique fixed point. Let $v$ be a fixed point of $f$ distinct from $u.$ Then
\begin{equation}
d_{\Theta}(f(u),f(v))\leq t[d_{\Theta}(v,f(v))+d_{\Theta} (u,f(u))] + d_{\Theta}(u,v)],t<1
\end{equation}
which is a contradiction. Therefore, $f$ has a unique fixed point. Hence $\{f^{n_{1}}(x)\}$ is convergent for each $x\in \mathsf{X}.$ 
\end{proof}
\bibliographystyle{amsplain}

\end{document}